\numberwithin{equation}{section}
\newtheorem{corollary}{Corollary}[section]
\newtheorem{lemma}{Lemma}[section]
\newtheorem{theorem}{Theorem}[section]
\theoremstyle{definition}
\newtheorem{remark}{Remark}[section]
\DeclareMathOperator{\arcsinh}{arcsinh}
\begin{document}
\title{On the Hardy number of comb domains}

\author{Christina Karafyllia}  
\address{Institute for Mathematical Sciences, Stony Brook University, Stony Brook, NY 11794, U.S.A.}
\email{christina.karafyllia@stonybrook.edu}

\subjclass[2010]{Primary 30H10; Secondary 30C35}

%\date{}
\keywords{Hardy number, Hardy space, comb domain}

\begin{abstract} Let ${H^p}\left( \mathbb{D} \right)$ be the Hardy space of all holomorphic functions on the unit disk $\mathbb{D}$ with exponent $p>0$. If $D\ne \mathbb{C}$ is a simply connected domain and $f$ is the Riemann mapping from $\mathbb{D}$ onto $D$, then the Hardy number of $D$, introduced by Hansen, is the supremum of all $p$ for which $f \in {H^p}\left( \mathbb{D} \right)$. Comb domains are a well-studied class of simply connected domains that, in general, have the form of the entire plane minus an infinite number of vertical rays. In this paper we study the Hardy number of a class of comb domains with the aid of the quasi-hyperbolic distance and we establish a necessary and sufficient condition for the Hardy number of these domains to be equal to infinity. Applying this condition, we derive several results that show how the mutual distances and the distribution of the rays affect the finiteness of the Hardy number. By a result of Burkholder our condition is also necessary and sufficient for all moments of the exit time of Brownian motion from comb domains to be infinite.
\end{abstract}

\maketitle

\section{Introduction}\label{int}

A classical problem in geometric function theory is to find geometric conditions for a holomorphic function on the unit disk to belong in Hardy spaces (see e.g. \cite{Ba}, \cite{Han1}, \cite{Han2}, \cite{Kar1}, \cite{Kim}, \cite{Pog} and \cite{Ra}). In this paper we study this problem in the case of conformal mappings from the unit disk onto a comb domain. The Hardy space with exponent $p>0$ \cite[p.\ 1-2]{Dur} is denoted by ${H^p}\left( \mathbb{D} \right)$ and is defined to be the set of all holomorphic functions, $f$, on the unit disk $\mathbb{D}$ that satisfy the condition 
\[\mathop {\sup }\limits_{0 < r < 1} \int_0^{2\pi } {{{| {f( {r{e^{i\theta }}} )} |}^p}d\theta  <  + \infty } .\]
The fact that a function $f$ belongs to some ${H^p}\left( \mathbb{D} \right)$ imposes a restriction on its growth and this restriction is stronger as $p$ increases. That is, if $p>q$ then $H^p (\mathbb{D}) \subset H^q (\mathbb{D})$. 
 
In \cite{Han1} Hansen studied the problem of determining the numbers $p$ for which $f \in {H^p}\left( \mathbb{D} \right)$ by studying $f\left( \mathbb{D} \right)$. For this purpose he introduced a number which he called the Hardy number of a region. Since we study comb domains, we only state the definition in the case of simply connected domains. Let $D\ne \mathbb{C}$ be a simply connected domain and $f$ be a Riemann mapping from $\mathbb{D}$ onto $D$. The Hardy number of $D$, or equivalently of $f$, is defined by 
\[{\rm h}\left( D \right) = \sup \left\{ {p > 0:f \in {H^p}\left( \mathbb{D} \right)} \right\}.\]
We note that this definition is independent of the choice of the Riemann mapping onto $D$. It is known that every conformal mapping on $\mathbb{D}$ belongs to ${H^p}\left( \mathbb{D} \right)$ for all $p \in (0,1/2)$ \cite[p.\ 50]{Dur}. This implies that  ${\rm h}\left( D \right)$ lies in $[1/2, +\infty]$. 

There is no general method for computing the Hardy number but there are some ways to estimate it for certain types of domains. In \cite{Han1} Hansen gave a lower bound for the Hardy number of an arbitrary region and improved this bound for simply connected domains. Moreover, he determined the exact value of the Hardy number of starlike \cite{Han1} and spiral-like regions \cite{Han2}. In \cite{Co2} Poggi-Corradini studied the Hardy number of K{\oe}nigs mappings. He also proved \cite{Pog} for a certain class of functions, which give a geometric model for the self-mappings of $\mathbb{D}$, that the Hardy number is equal to infinity if and only if the image region does not contain a twisted sector. Furthermore, in \cite{Ess} and \cite{Kim} Ess{\'e}n, and  Kim and Sugawa, respectively, gave a description of the Hardy number of a plane domain in terms of harmonic measure. In \cite{Kar} the current author gave a formula for the Hardy number of a simply connected domain in terms of hyperbolic distance. Finally, Burkholder \cite{Bur} studied the Hardy number of a domain in relation with the exit time of Brownian motion (see also \cite{Betsakos}). More precisely, if $D$ is a simply connected domain, then we define the number ${\widetilde h}(D)$ to be the supremum of all $p>0$ for which the $p$-th moment of the exit time of Brownian motion is finite. Then Burkholder proved in \cite{Bur} that 
\begin{equation}\label{brownian}
{\widetilde h}(D)={\rm h}(D)/2.
\end{equation}

Comb domains furnish an interesting class of simply connected domains and thus they have been studied from various points of view. For example, they have been studied in relation with the angular derivative (see \cite{Jen}, \cite{Karam} and references therein), the harmonic measure \cite{Bet} and the semigroups of holomorphic functions \cite{Betsa}. Moreover, in \cite{Bou} Boudabra and Markowsky studied the moments of the exit time of planar
Brownian motion from comb domains. 

Let $\left\{ {x_n } \right\}_{n \in \mathbb{Z}} $ be a strictly increasing sequence of real numbers such that $x_0=0$ and 
\[ \mathop {\inf }\limits_{n \in \mathbb{Z}} (x_{n}  - x_{n-1} )>0.\]
Also, let $\left\{ {c_n } \right\}_{n \in \mathbb{Z}} $ be a sequence of positive numbers such that for some constants $m,M>0$,
\[m \le c_n\le M\]
for every $n\in \mathbb{Z}$. We consider comb domains of the form (see Fig.\ \ref{comb})
\[D=\mathbb{C}\backslash \bigcup\limits_{n \in \mathbb{Z}} {\left\{ {x_n  + iy: |y| \ge c_n} \right\}}. \] 

\begin{figure} 
	\begin{center}
		\includegraphics[scale=0.5]{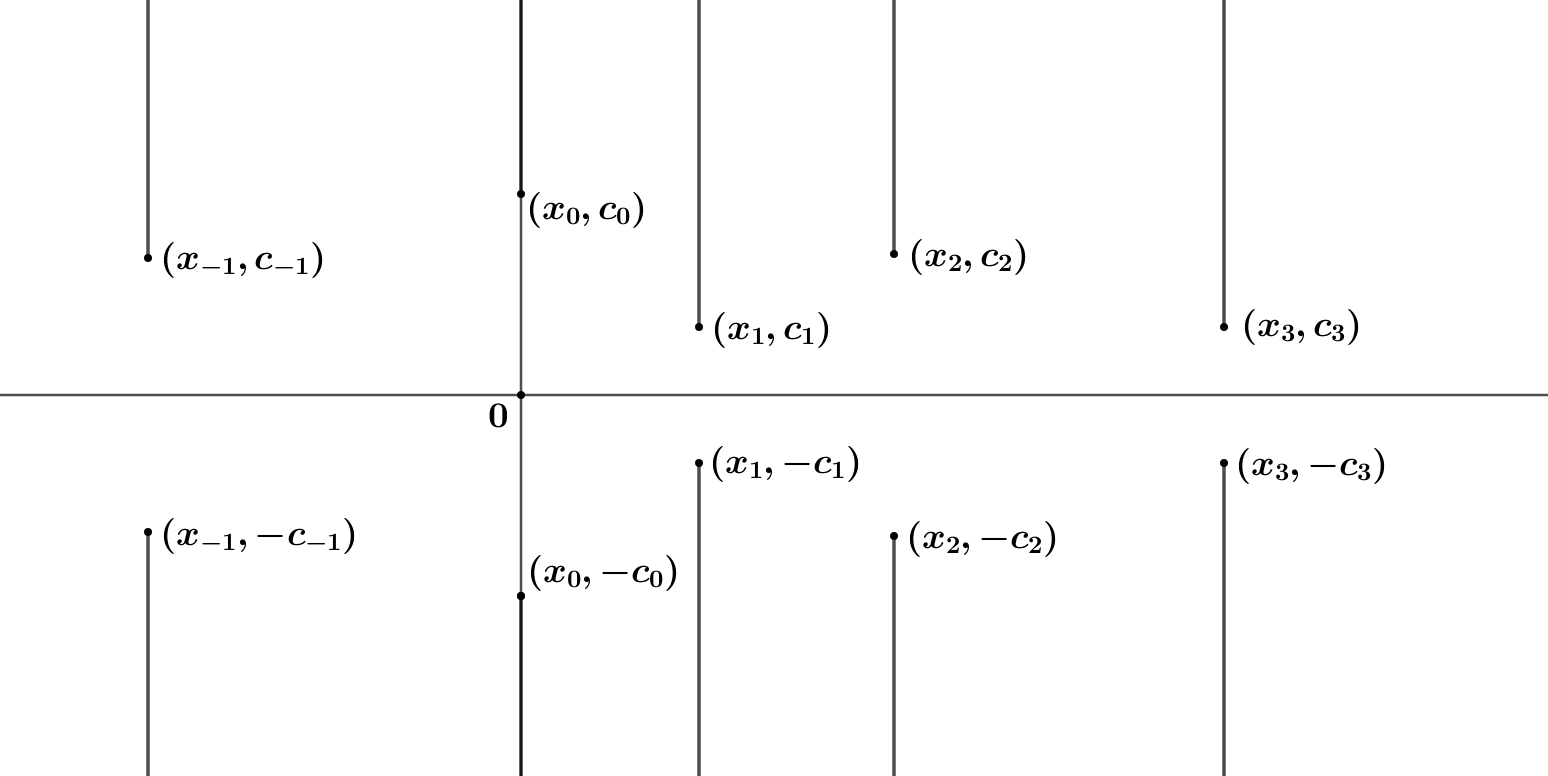}
		\vspace*{2mm}
		\caption{Comb domain}
		\label{comb}
	\end{center}
\end{figure}

Since we want to find conditions for the Hardy number of such comb domains to be equal to infinity, we can simplify the problem in the following way. First, we observe that if 
\[D_{m}=\mathbb{C}\backslash \bigcup\limits_{n \in \mathbb{Z}} {\left\{ {x_n  + iy: |y| \ge m} \right\}}\,\,{\text{and}}\,\,D_{M}=\mathbb{C}\backslash \bigcup\limits_{n \in \mathbb{Z}} {\left\{ {x_n  + iy: |y| \ge M} \right\}},\]
then 
$D_{m}\subseteq D\subseteq D_{M}$ and hence ${\rm h}(D_{M}) \le {\rm h}(D)\le {\rm h}(D_{m})$ (see \cite{Han1}). Moreover, since the Hardy number is invariant under affine mappings (see \cite{Han1}), we have ${\rm h}(D_{m})={\rm h}(D_{M})={\rm h}(D)$. Therefore, it suffices to study comb domains of the form
\[D_c=\mathbb{C}\backslash \bigcup\limits_{n \in \mathbb{Z}} {\left\{ {x_n  + iy: |y| \ge c} \right\}}, \]
where $c>0$. However, we can do more simplifications. We observe that if 
\[D_c^+=D\cap \left\{ z:\mathrm{Re} z>-x_1 \right\}\,\,{\text{and}}\,\,D_c^-=D\cap \left\{ z:\mathrm{Re} z<-x_{-1} \right\},\]
then ${\rm h}(D_c)=+\infty$ if and only if ${\rm h}(D_c^+)=+\infty$ and ${\rm h}(D_c^-)=+\infty$. This follows from Proposition 8 in \cite{Bou} and (\ref{brownian}). Furthermore, ${\rm h}(D_c^-)={\rm h}(-D_c^-)$. Therefore, it suffices to study the Hardy number of comb domains of the form $D_c^+$. Finally, since the Hardy number is invariant under affine mappings, without loss of generality, we suppose that $c=1$ and the infimum of the differences $x_n-x_{n-1}$ is greater than $1$. So, henceforth we consider comb domains $C$ of the form 
\begin{align}\label{des}
C=\left\{ z:\mathrm{Re} z>-x_1 \right\} \backslash \bigcup\limits_{n \in \mathbb{N}\cup \left\{0\right\}} \left\{ x_n+iy:|y|\ge 1 \right\},
\end{align} 
where $x_0=0$ and $\left\{ {x_n } \right\}_{n \in \mathbb{N}} $ is a strictly increasing sequence of positive numbers such that  
\[\mathop {\lim }\limits_{n \to  + \infty } x_n  =  + \infty\,\,\, {\rm and}\,\,\, \mathop {\inf }\limits_{n \in \mathbb{N}} \left( {x_{n}  - x_{n-1} } \right)>1.\]
 
First, we establish a necessary and sufficient condition for ${\rm{h}}(C)$ to be equal to infinity by studying the Euclidean distances between the rays. For every $n\in \mathbb{N}$, we denote these distances by 
\[\alpha_n=x_n-x_{n-1}.\]
 
\begin{theorem}\label{th2}
	Let $C$ be a comb domain of the form {\rm (\ref{des})}. Then ${\rm h}(C)=+\infty$ if and only if
	\[\mathop {\lim }\limits_{n \to  + \infty }\frac{{\sum \limits_{i = 1}^{n} \log  {{{ \alpha_i }}}}}{{\log x_{n} }}=+\infty\,\,\,or,\, equivalently,\,\,\,\mathop {\lim }\limits_{n \to  + \infty } \frac{{\sum \limits_{i = 1}^{n} \log  {{{  {\alpha_{i} } }}}}}{{\log \sum \limits_{i = 1}^{n}  {{{  {\alpha_{i} } }}} }}=+\infty.\]
\end{theorem}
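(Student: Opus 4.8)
The plan is to reduce the computation of $\mathrm{h}(C)$ to an estimate of the quasi-hyperbolic (equivalently, hyperbolic) distance from a fixed base point to points escaping to infinity in $C$, and then to carry out this distance estimate using the explicit comb geometry. I would start from the characterization of the Hardy number in terms of the hyperbolic distance established by the author in \cite{Kar}: writing $d_C$ for the hyperbolic distance of $C$ and fixing the base point $0 \in C$, one has
\[\mathrm{h}(C) = \liminf_{\substack{w \in C \\ |w| \to \infty}} \frac{d_C(0, w)}{\log |w|},\]
so that $\mathrm{h}(C) = +\infty$ if and only if this liminf is $+\infty$. Since $C$ is simply connected, the Koebe distortion theorem makes $d_C$ comparable, up to a fixed multiplicative constant, to the quasi-hyperbolic distance $k_C(0,w) = \inf_\gamma \int_\gamma |dz|/\dist(z,\partial C)$; because only the finiteness of the liminf is at stake, I may work throughout with $k_C$, which is better suited to the piecewise-Euclidean comb boundary. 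I would also record at the outset that the two displayed forms of the criterion are literally the same statement, since telescoping gives $x_n = \sum_{i=1}^n \alpha_i$, whence $\log x_n = \log\sum_{i=1}^n \alpha_i$.

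The geometric heart of the argument is the estimate
\[k_C(0, w) \asymp \sum_{i=1}^{n} \log \alpha_i \qquad \text{whenever } |w| \asymp x_n,\]
with implied constants independent of $n$. For the upper bound I would exhibit an explicit competitor path: travel rightward along the real axis $y=0$, which lies in $C$ because the teeth obstruct only $|y|\ge 1$. Along this spine the distance to $\partial C$ at a point $x$ is governed by the nearest tooth tip $(x_k,\pm 1)$, so the cost of crossing the $i$-th gap is comparable to $\int_0^{\alpha_i/2} dt/\sqrt{t^2+1} = \arcsinh(\alpha_i/2)$, and the hypothesis $\inf_i \alpha_i > 1$ is exactly what guarantees $\arcsinh(\alpha_i/2) \asymp \log\alpha_i$ uniformly in $i$; summing over $i=1,\dots,n$ yields the upper bound. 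This upper bound already gives the easy implication: if the criterion fails, then the points $w = x_n$ on the real axis form a sequence with $d_C(0,w)/\log|w| \asymp (\sum_{i\le n}\log\alpha_i)/\log x_n$ bounded along a subsequence, so the defining liminf is finite and $\mathrm{h}(C) < +\infty$.

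The main obstacle is the matching lower bound, needed for the reverse implication: I must show that no path can reach modulus $R \asymp x_n$ more cheaply than order $\sum_{i=1}^n \log\alpha_i$, and in particular that diverting upward through a wide channel does not help at the level of orders of magnitude. For this I would split escape routes into two regimes. A path reaching large real part must cross the region lying over each successive gap while remaining near the strip, and each such crossing costs at least a constant multiple of $\log\alpha_i$ by the same $\arcsinh$ lower bound applied to the density $1/\dist(\cdot,\partial C)$. A path reaching large $|\operatorname{Im} w|$ instead ascends a channel of width $\alpha_k$ to height comparable to $R$; here I would bound the cost below by comparison with the semi-infinite strip of that width, using the extremal length of the curve family joining the channel mouth to height $R$, and check that this is again at least of order $\sum_{i\le k}\log\alpha_i$. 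Assembling the two regimes shows the minimal cost over all escape routes has the claimed order, so the liminf defining $\mathrm{h}(C)$ is infinite precisely when $\sum_{i=1}^n\log\alpha_i/\log x_n \to +\infty$. Making this lower bound uniform over \emph{all} $w$ rather than along the real axis, and correctly handling the interplay between the horizontal and vertical escape directions, is where the real care is required.
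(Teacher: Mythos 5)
Your framework is the right one: the characterization of $\mathrm{h}(C)$ via hyperbolic distance from \cite{Kar} (Theorem \ref{kara}), the passage to the quasi-hyperbolic metric via (\ref{qh}), and the real-axis competitor path giving $d_C(0,x_n)\lesssim\sum_{i\le n}\log\alpha_i$ are exactly the ingredients of the paper's Lemma \ref{upper}, and your treatment of the easy direction is sound (the additive $O(n)$ error from $\arcsinh(\alpha_i/2)$ versus $\log\alpha_i$ is absorbed using $\inf_i\alpha_i>1$, as you indicate). The gap is in the lower bound. The two-sided estimate you call the geometric heart, $k_C(0,w)\asymp\sum_{i=1}^{n}\log\alpha_i$ whenever $|w|\asymp x_n$, is false in the direction you need, even for the infimum over the circle. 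Take $\alpha_1=e^{M}$ and $\alpha_i=2$ for $i\ge 2$, and let $R=x_n$ with $n=\lfloor M^2\rfloor$, so that $R=e^{M}+2(n-1)\asymp e^{M}$ while $\sum_{i\le n}\log\alpha_i\ge (n-1)\log 2\asymp M^2\gg \log R$. The circle $|z|=R$ meets the first channel, which has width $e^{M}\asymp R$, and the diagonal path $t\mapsto (1+t)+it$ reaches it at quasi-hyperbolic cost $\sqrt{2}\log R+O(1)$, since along it $\dist\left(z,\partial C\right)\asymp \min\left(1+t,\,x_1-1-t\right)$. Hence $\inf_{|w|=R}k_C(0,w)=O(\log x_n)$, far below $\sum_{i\le n}\log\alpha_i$. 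Your proposed remedy for the vertical regime, comparing the channel to a semi-infinite strip and checking that the ascent costs at least $\sum_{i\le k}\log\alpha_i$, cannot succeed: the strip comparison yields a cost of order $R/\alpha_k$ plus a logarithmic entrance term, which in the example above is $O(\log R)$ and bears no relation to the partial sums of $\log\alpha_i$.

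What saves the theorem, and what the paper actually proves in Lemma \ref{lower}, is a weaker and differently indexed bound: one locates the component of $F_r$ on which $d_C(0,F_r)$ is attained, sitting in the channel $(x_{i_n},x_{i_n+1})$, and bounds $d_C(0,F_r)$ below by $\sum_{j\le i_n}\log\alpha_j$ --- the sum up to the channel actually used, not up to the index $n$ with $x_n\asymp r$. The vertical-escape case is then handled not by a direct cost estimate but by a bootstrapping contradiction: assuming $\mathrm{h}(C)<+\infty$ and setting $k=2/(3\mathrm{h}(C))$, if the minimizing channel had $x_{i_n+1}\le k\,r_n/\log r_n$ then the crossing from height $1$ to height $\sqrt{r_n^2-x_{i_n+1}^2}$ alone would force $\mathrm{h}(C)\ge\tfrac{3}{2}\mathrm{h}(C)$; therefore $\log x_{i_n+1}\sim\log r_n$, and the ratio $\sum_{j\le i_n}\log\alpha_j/\log x_{i_n+1}$ is bounded below by $\liminf_m \sum_{j\le m}\log\alpha_j/\log x_m-1$. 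Note that this is consistent with the counterexample above: the cheap vertical escape occurs precisely at an index $m$ (the wide channel) where $\sum_{j\le m}\log\alpha_j/\log x_m$ is itself $O(1)$, so the hypothesis of the hard direction fails there as well. To repair your argument you would need to replace the false uniform lower bound by this channel-indexed one and supply some substitute for the paper's self-referential choice of $k$.
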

An immediate consequence is that if the sequence $\alpha_n$ is bounded then ${\rm h}(C)=+\infty$. So, we actually study the case of $\alpha_n$ being unbounded. By applying Theorem \ref{th2} we can examine how the mutual distances and the distribution of the rays affect the finiteness of the Hardy number. First, we consider the case of $\alpha_n$ growing at a subexponential rate and prove that ${\rm h}(C)$ is always equal to infinity. 

\begin{theorem}\label{th3}
If 
\[\mathop {\lim }\limits_{n \to  + \infty } \frac{{\log \alpha_n }}{n} = 0,\]
then ${\rm h}(C)=+\infty$.
\end{theorem}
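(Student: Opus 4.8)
The plan is to derive Theorem~\ref{th3} directly from Theorem~\ref{th2}, using its second (equivalent) formulation. Since $x_0=0$, we have $x_n=\sum_{i=1}^n\alpha_i$, so the two conditions in Theorem~\ref{th2} literally coincide, and it suffices to show that the hypothesis $\log\alpha_n/n\to0$ forces
\[
\lim_{n\to+\infty}\frac{\sum_{i=1}^n\log\alpha_i}{\log\sum_{i=1}^n\alpha_i}=+\infty .
\]
The strategy is to bound the numerator from below by a fixed positive multiple of $n$ and the denominator from above by an arbitrarily small multiple of $n$, so that the ratio is eventually at least a constant that blows up as the multiple shrinks.

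For the numerator I would use the standing normalization $a:=\inf_{n}\alpha_n>1$ (which is exactly the assumption $\inf_n(x_n-x_{n-1})>1$ in \eqref{des}). Then $\log\alpha_i\ge\log a>0$ for every $i$, whence $\sum_{i=1}^n\log\alpha_i\ge n\log a$. For the denominator, fix $\varepsilon>0$. The hypothesis $\log\alpha_n/n\to0$ produces an integer $N$ with $\log\alpha_i<\varepsilon i$ for all $i\ge N$, and absorbing the finitely many initial terms into a constant gives $\log\alpha_i\le\varepsilon i+C_\varepsilon$ for all $i$. Consequently $\max_{1\le i\le n}\alpha_i\le e^{\varepsilon n+C_\varepsilon}$, and the crude bound $\sum_{i=1}^n\alpha_i\le n\max_{1\le i\le n}\alpha_i$ yields $\log\sum_{i=1}^n\alpha_i\le\log n+\varepsilon n+C_\varepsilon$. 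Since $\log n=o(n)$, this gives $\log\sum_{i=1}^n\alpha_i\le 2\varepsilon n$ for all sufficiently large $n$.

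Combining the two estimates, for large $n$,
\[
\frac{\sum_{i=1}^n\log\alpha_i}{\log\sum_{i=1}^n\alpha_i}\ge\frac{n\log a}{2\varepsilon n}=\frac{\log a}{2\varepsilon},
\]
so $\liminf_{n\to+\infty}$ of the ratio is at least $\log a/(2\varepsilon)$. As $\varepsilon>0$ is arbitrary and $\log a>0$ is fixed, letting $\varepsilon\to0$ shows the ratio tends to $+\infty$, and Theorem~\ref{th2} then gives ${\rm h}(C)=+\infty$.

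I expect no genuine obstacle here; the only point that needs care is the order of the quantifiers. One must first send $n\to+\infty$ for a \emph{fixed} $\varepsilon$ to extract the constant lower bound $\log a/(2\varepsilon)$, and only afterwards let $\varepsilon\to0$. A sharper denominator bound via the geometric sum $\sum_i e^{\varepsilon i}$ would replace $2\varepsilon$ by $\varepsilon$, but this refinement is unnecessary, since any fixed positive multiple of $n$ already suffices for the conclusion.
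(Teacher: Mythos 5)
Your proof is correct and follows essentially the same route as the paper: both arguments bound the numerator below by $n\log\bigl(\inf_n\alpha_n\bigr)$ and the denominator above by $\log\bigl(n\max_{1\le i\le n}\alpha_i\bigr)$, reducing everything to the fact that $\log\max_{1\le i\le n}\alpha_i=o(n)$, which then yields the second condition of Theorem~\ref{th2}. The only difference is cosmetic: you establish the $o(n)$ bound on the running maximum by a direct $\varepsilon$--$N$ estimate, whereas the paper proves $\lim_n\frac{\log A_n}{n}=0$ for $A_n=\max_{1\le j\le n}\alpha_j$ by a contradiction and subsequence argument; your version is, if anything, slightly more streamlined.
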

This result is stronger than the corollary of the main theorem of Boudabra and Markowsky in \cite{Bou}, where they approach the problem by studying the moments of the exit time of the Brownian motion. In fact, their main theorem implies that if $\alpha_n$ grows at most polynomially in $n$ then the Hardy number is infinite. However, it does not cover all subexponential sequences. For the proof of Theorem \ref{th3} see Section \ref{proofs}.

Next, we explain why the assumption in Theorem \ref{th3} cannot be relaxed. Theorem \ref{th3} covers all the cases when $\alpha_n$ grows at a subexponential rate, even those in which the sequence $\alpha_n$ oscillates very rapidly. For example, one can take $\alpha_n=2$ when $n$ is odd and $\alpha_n=n^p$ when $n$ is even and $p>0$.

If the sequence $\alpha_n$ is of exponential type, i.e. $\alpha_n=e^{cn}$ for every $n\in \mathbb{N}$, and hence there are no sharp oscillations, then Theorem \ref{th2} implies that the Hardy number is equal to infinity. However, if we allow wild oscillations and suppose that $\alpha_n \le e^n$, then the Hardy number might be finite. Actually, we construct such an example in Theorem \ref{ex}. Therefore, in order to obtain a general result in case $\alpha_n \le e^n$, we need to suppose that there are no wild oscillations. By imposing that
\[\mathop {\lim }\limits_{n \to  + \infty } \alpha_n =+\infty\] 
and thus preventing sharp oscillations of $\alpha_n$, we prove that ${\rm h}(C)=+\infty$. In fact, a more general result is true.

\begin{theorem}\label{gen}
	Let $\left\{ {b_n } \right\}_{n \in \mathbb{N}} $ be an increasing sequence of positive numbers such that $\mathop {\inf }\limits_{n>1} (b_n-b_{n-1})>0$. Let $\alpha_n \le e^{b_n}$ for every $n \in \mathbb{N}$. If 
	\[\mathop {\lim }\limits_{n \to  + \infty } \frac{{\log \alpha _n }}{{b_n  - b_{n - 1} }} = +\infty,\]
	then ${\rm h}(C)=+\infty$.
\end{theorem}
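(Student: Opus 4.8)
The strategy is to reduce everything to the criterion of Theorem~\ref{th2}. Since $x_0=0$ we have $x_n=\sum_{i=1}^n\alpha_i$, so it suffices to prove
\[\lim_{n\to+\infty}\frac{\sum_{i=1}^n\log\alpha_i}{\log x_n}=+\infty.\]
A convenient feature of the setup is that $\alpha_n>1$ for every $n$ (because $\inf_n(x_n-x_{n-1})>1$), so every term $\log\alpha_i$ in the numerator is strictly positive and no cancellation can occur, which streamlines the lower bound below. Write $\delta=\inf_{n>1}(b_n-b_{n-1})>0$ and note that $b_n\ge b_1+(n-1)\delta\to+\infty$.

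First I would bound the denominator from above. Since $b_n-b_i=\sum_{k=i+1}^n(b_k-b_{k-1})\ge(n-i)\delta$, the hypothesis $\alpha_i\le e^{b_i}$ gives $\alpha_i\le e^{b_n}e^{-(n-i)\delta}$, and summing a geometric series yields
\[x_n=\sum_{i=1}^n\alpha_i\le e^{b_n}\sum_{j=0}^{n-1}e^{-j\delta}\le\frac{e^{b_n}}{1-e^{-\delta}}.\]
Hence $\log x_n\le b_n+C_1$, where $C_1=\log\frac{1}{1-e^{-\delta}}$ is a fixed positive constant. This is the step I expect to be the crux: the uniform gap $\delta>0$ is exactly what forces $\log x_n$ to exceed $b_n$ by no more than a bounded amount. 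Without it the partial sums $\sum e^{b_i}$ could be far larger than $e^{b_n}$ and the denominator would be out of control, so this is where the hypothesis $\inf_{n>1}(b_n-b_{n-1})>0$ is essential.

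Next I would bound the numerator from below using the divergence hypothesis. Fix $K>0$; by assumption there is $N$ such that $\log\alpha_i\ge K(b_i-b_{i-1})$ for all $i\ge N$, so discarding the (positive) terms with $i<N$ and telescoping gives
\[\sum_{i=1}^n\log\alpha_i\ge\sum_{i=N}^n\log\alpha_i\ge K\sum_{i=N}^n(b_i-b_{i-1})=K(b_n-b_{N-1})\]
for $n\ge N$. Combining the two estimates,
\[\frac{\sum_{i=1}^n\log\alpha_i}{\log x_n}\ge\frac{K(b_n-b_{N-1})}{b_n+C_1},\]
and the right-hand side tends to $K$ as $n\to+\infty$ because $b_n\to+\infty$. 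Since $K>0$ was arbitrary, the ratio tends to $+\infty$, and Theorem~\ref{th2} then gives ${\rm h}(C)=+\infty$.
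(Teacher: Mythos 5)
Your proof is correct and follows the same overall route as the paper: reduce to the criterion of Theorem~\ref{th2}, bound $\log x_n$ above by $b_n$ plus a bounded error using the gap condition $\inf_{n>1}(b_n-b_{n-1})>0$, and bound the numerator below via the divergence hypothesis. The two differences are cosmetic but worth noting. For the denominator you sum a geometric series to get $\log x_n\le b_n+C_1$, whereas the paper uses the cruder bound $\sum_{i=1}^n e^{b_i}\le ne^{b_n}$ and then shows $\log n/b_n\to 0$ (again from the gap condition); both work. For the numerator the paper invokes the generalized Stolz--Cesaro theorem (Theorem~\ref{stolz}) applied to $\sum\log\alpha_i$ over $b_1+\sum_{i\ge 2}(b_i-b_{i-1})$, while you prove exactly that instance of Stolz--Cesaro by hand: fix $K$, telescope $\sum_{i=N}^n(b_i-b_{i-1})=b_n-b_{N-1}$, and use $\log\alpha_i>0$ to discard the initial terms. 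Your version is self-contained and makes visible where positivity of $\log\alpha_i$ and the divergence of $b_n$ enter; the paper's is shorter given that the lemma is already stated in the preliminaries. No gaps.
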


An interesting case, as we already remarked, is when $b_n=n$.

\begin{corollary}\label{cor}
	Let $\alpha_n \le e^n$ for every $n \in \mathbb{N}$. If 
	\[\mathop {\lim }\limits_{n \to  + \infty } \alpha_n = +\infty,\]
	then ${\rm h}(C)=+\infty$.
\end{corollary}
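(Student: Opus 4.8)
The plan is to obtain Corollary \ref{cor} as a direct specialization of Theorem \ref{gen}, choosing the auxiliary sequence $b_n = n$. With this choice every hypothesis of Theorem \ref{gen} collapses onto the single assumption of the corollary, so no new work beyond a verification of the hypotheses is needed.

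First I would check the structural conditions imposed on $\{b_n\}$ in Theorem \ref{gen}. Taking $b_n = n$ gives an increasing sequence of positive numbers, and $b_n - b_{n-1} = 1$ for every $n > 1$, so that $\inf_{n>1}(b_n - b_{n-1}) = 1 > 0$. Thus $\{b_n\}$ is admissible. Moreover, the growth hypothesis $\alpha_n \le e^{b_n}$ becomes exactly $\alpha_n \le e^n$, which is precisely the assumption of the corollary.

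Next I would verify the limit condition. Since $b_n - b_{n-1} = 1$, we have
\[\frac{\log \alpha_n}{b_n - b_{n-1}} = \log \alpha_n.\]
By hypothesis $\alpha_n \to +\infty$, hence $\log \alpha_n \to +\infty$, so the limit required by Theorem \ref{gen} equals $+\infty$.

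With all hypotheses of Theorem \ref{gen} verified, I would invoke that theorem to conclude ${\rm h}(C) = +\infty$. There is essentially no obstacle here: the full content of the corollary resides in Theorem \ref{gen}, and the only point demanding any care is confirming that the specialization $b_n = n$ genuinely satisfies the separation condition $\inf_{n>1}(b_n - b_{n-1}) > 0$, which it does with value $1$.
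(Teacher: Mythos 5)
Your proposal is correct and is exactly the argument the paper intends: the corollary is presented as the special case $b_n = n$ of Theorem \ref{gen}, and your verification of the hypotheses (in particular $\inf_{n>1}(b_n-b_{n-1})=1>0$ and $\log\alpha_n/(b_n-b_{n-1})=\log\alpha_n\to+\infty$) is precisely what is needed.
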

Next, we prove that the assumption in Theorem \ref{gen} is sharp. In other words, if there are wild oscillations of $\alpha_n$, then the Hardy number might be finite. 

\begin{theorem}\label{ex} Let $\left\{b_n \right\}$ be as Theorem \ref{gen}. There is a comb domain $C$ such that $\alpha_n \le e^{b_n}$ for every $n \in \mathbb{N}$, 
	\[\mathop {\liminf }\limits_{n \to  + \infty } \frac{{\log \alpha _n }}{{b_n  - b_{n - 1} }} < +\infty\]
and ${\rm h}(C)<+\infty$.
\end{theorem}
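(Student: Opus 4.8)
The plan is to exploit the characterization in Theorem \ref{th2}. Since $x_n = \sum_{i=1}^n \alpha_i$, we have ${\rm h}(C) < +\infty$ precisely when the ratio $\sum_{i=1}^n \log \alpha_i / \log x_n$ fails to tend to $+\infty$, i.e. stays bounded along some subsequence. To force this while keeping $\alpha_n \le e^{b_n}$ and $\liminf_n (\log \alpha_n)/(b_n - b_{n-1}) < +\infty$, I would let $\alpha_n$ oscillate wildly between a small constant and the maximal allowed value $e^{b_n}$. The guiding idea is that a single gigantic gap $\alpha_N \approx e^{b_N}$ makes $\log x_N \approx b_N$ all by itself, so the denominator is essentially fed entirely by the last big jump, whereas the numerator $\sum_{i \le N} \log \alpha_i$ accumulates only $b_N$ plus lower-order contributions; arranging the big jumps so that their exponents grow geometrically keeps the accumulated numerator comparable to $b_N$, and hence the ratio bounded.

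Concretely, let $\delta = \inf_{n>1}(b_n - b_{n-1}) > 0$; since $b_n$ is increasing with gaps bounded below, it tends to $+\infty$, so I can select recursively a sparse sequence of indices $N_1 < N_2 < \cdots$ with $b_{N_k} \ge 2 b_{N_{k-1}}$. I then define the comb by setting
\[
\alpha_n = \begin{cases} e^{b_{N_k}}, & n = N_k, \\ 2, & \text{otherwise}, \end{cases}
\]
adjusting the finitely many initial indices with $e^{b_n} < 2$ so that $1 < \alpha_n \le e^{b_n}$ there as well. This guarantees $\alpha_n \le e^{b_n}$ for all $n$ and $\inf_n \alpha_n > 1$, so $C$ is an admissible comb domain of the form (\ref{des}). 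The liminf condition is then immediate: along the non-special indices $\log \alpha_n = \log 2$, so $(\log \alpha_n)/(b_n - b_{n-1}) \le (\log 2)/\delta$, whence $\liminf_n (\log \alpha_n)/(b_n - b_{n-1}) \le (\log 2)/\delta < +\infty$.

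It remains to show ${\rm h}(C) < +\infty$, for which I would estimate the ratio of Theorem \ref{th2} along the subsequence $n = N_K$. For the denominator, $x_{N_K} \ge \alpha_{N_K} = e^{b_{N_K}}$ gives $\log x_{N_K} \ge b_{N_K}$, which is the only bound needed there. For the numerator, up to the finitely many adjusted initial terms,
\[
\sum_{i=1}^{N_K} \log \alpha_i = \sum_{k=1}^{K} b_{N_k} + (N_K - K)\log 2 \le 2 b_{N_K} + \tfrac{\log 2}{\delta} b_{N_K},
\]
using the geometric growth $\sum_{k=1}^K b_{N_k} \le 2 b_{N_K}$ and the linear lower bound $b_{N_K} \ge (N_K - 1)\delta$ to control $(N_K - K)\log 2$. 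Dividing, the ratio at $n = N_K$ is at most $2 + (\log 2)/\delta$, a bound independent of $K$. Hence $\sum_{i=1}^n \log \alpha_i / \log x_n$ does not tend to $+\infty$, and Theorem \ref{th2} yields ${\rm h}(C) < +\infty$.

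The routine points are the admissibility bookkeeping for the finitely many small indices and the elementary geometric-series estimate. The main obstacle, and the step deserving the most care, is verifying that one can simultaneously pin $\log x_{N_K}$ at $b_{N_K}$ from below while preventing the many small gaps and the earlier big gaps from inflating the numerator beyond $O(b_{N_K})$; both requirements hinge on the lower bound $b_n \gtrsim \delta n$, which forces $N_K = O(b_{N_K})$, together with the geometric spacing $b_{N_k} \ge 2 b_{N_{k-1}}$ of the big jumps. Once these two quantitative facts are in place, the bounded-ratio conclusion is essentially forced.
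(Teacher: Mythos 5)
Your construction and estimates are essentially the paper's own proof: the paper also sets $\alpha_n$ equal to a fixed constant $c>1$ off a sparse subsequence $\{k_m\}$ and equal to $e^{b_{k_m}}$ on it, chooses the subsequence so that $\sum_{i\le m} b_{k_i}\le 2b_{k_m}$ (via $b_{k_m}\ge\sum_{i<m}b_{k_i}$, equivalent in effect to your doubling condition), and bounds the ratio in Theorem \ref{th2} along $n=k_m$ using the same lower bound $b_n\gtrsim rn$ to control the contribution of the constant gaps. The argument is correct and no substantive difference from the paper's proof remains to comment on.
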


Theorem \ref{gen} covers a variety of cases such as $\alpha_n$ being comparable to $e^{n^{p}}$ for some $p>0$, $\alpha_n$ being comparable to $e^{e^{n^k}}$ for some $k<1$ and $\alpha_n$ being comparable to $e^{e^{n/\log n}}$. However, it does not apply if $\alpha_n$ is comparable to $e^{e^n}$. In this case, despite the fact that there are no wild oscillations of $\alpha_n$, Theorem \ref{th2} implies that  ${\rm h}(C)$ is finite. 

\begin{theorem}\label{last}
If $\alpha_n$ is comparable to $e^{e^n}$ for every $n \in \mathbb{N}$, then ${\rm h}(C)<+\infty$.
\end{theorem}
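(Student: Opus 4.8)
The plan is to apply the criterion of Theorem \ref{th2} directly. Since ${\rm h}(C)=+\infty$ is equivalent to
\[\mathop {\lim }\limits_{n \to +\infty}\frac{\sum_{i=1}^{n}\log \alpha_i}{\log x_n}=+\infty,\]
it suffices to show that under the hypothesis that $\alpha_n$ is comparable to $e^{e^n}$ this ratio stays bounded; in fact I expect it to converge to a finite constant, so that the limit is \emph{not} $+\infty$ and hence ${\rm h}(C)<+\infty$. I would use the form of the criterion with $\log x_n$ in the denominator, recalling that $x_n=\sum_{i=1}^{n}\alpha_i$ because $x_0=0$ and $\alpha_i=x_i-x_{i-1}$ (so the two equivalent forms in Theorem \ref{th2} coincide here).

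First I would record the consequence of comparability: there are constants $A,B>0$ with $A e^{e^n}\le \alpha_n\le B e^{e^n}$ for every $n$, whence $\log \alpha_n=e^n+O(1)$. Summing and using the geometric sum $\sum_{i=1}^{n}e^i=(e^{n+1}-e)/(e-1)$ gives
\[\sum_{i=1}^{n}\log \alpha_i=\sum_{i=1}^{n}e^i+O(n)=\frac{e}{e-1}\,e^n+O(n),\]
so the numerator is asymptotic to $\tfrac{e}{e-1}e^n$.

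Next I would estimate the denominator. The point is that the series $x_n=\sum_{i=1}^{n}\alpha_i$ is dominated by its last term because of the extremely rapid growth $e^{e^i}$. Indeed, the lower bound $x_n\ge \alpha_n\ge A e^{e^n}$ and the upper bound $x_n\le B\sum_{i=1}^{n}e^{e^i}\le B\,n\,e^{e^n}$ together yield, after taking logarithms, $\log x_n=e^n+O(\log n)$. Combining this with the numerator estimate, the ratio tends to $\tfrac{e}{e-1}<+\infty$; in particular the limit in Theorem \ref{th2} is not $+\infty$, and therefore ${\rm h}(C)<+\infty$.

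The only real subtlety is the denominator estimate: one must check that the doubly exponential growth makes $x_n$ comparable to its final term $\alpha_n$ up to a factor that is merely polynomial in $n$, which becomes negligible after taking logarithms, so that $\log x_n$ and $e^n$ differ only by a lower-order term. Everything else reduces to an elementary computation with geometric sums.
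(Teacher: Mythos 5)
Your proposal is correct and follows essentially the same route as the paper: apply Theorem \ref{th2}, use $\log\alpha_i=e^i+O(1)$ and the geometric sum to get the numerator asymptotic to $\tfrac{e}{e-1}e^n$, and observe that the doubly exponential growth forces $\log x_n=e^n+O(\log n)$, so the ratio stays bounded. The only cosmetic difference is that the paper simply bounds the denominator below by $\log\alpha_n$ to cap the liminf at $\tfrac{e}{e-1}$, whereas you also establish the matching upper bound on $x_n$ to identify the actual limit; both suffice.
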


Therefore, the Hardy number of $C$ might be finite when the sequence $\alpha_n$ oscillates very quickly or if it goes to infinity rapidly enough like $\alpha_n$ being comparable to $e^{e^n}$.

\begin{remark}
Note that by (\ref{brownian}) all the results above concerning the Hardy number provide us with information about the finiteness of the moments of the exit time of Brownian motion from comb domains. In fact, ${\widetilde h}(D)$ is equal to infinity if and only if ${\rm h}(D)$ is equal to infinity.
\end{remark}

\begin{remark}
By Corollary \ref{cor} the Riemann mapping from $\mathbb{D}$ onto the comb domain $C$ with $x_n=e^n$ belongs to every $H^p(\mathbb{D})$ space. However, it does not belong to $BMOA$ (see \cite{St}). So, it is an example which ensures that
\[ BMOA \subsetneq \bigcap\limits_{p > 0} H^p(\mathbb{D}). \]
\end{remark}

In Section \ref{pre}, we introduce some preliminaries such as notions and results in hyperbolic geometry and their connection with the Hardy number. In Section \ref{condition}, we prove Theorem \ref{th2} and applying this, in Section \ref{proofs}, we prove all the other theorems stated above.

\section{Preliminary results}\label{pre}

\subsection{Hyperbolic distance}

The hyperbolic distance between two points $z,w$ in the unit disk $\mathbb{D}$ (see \cite[p.\ 11-28]{Bea}) is defined by 
\[{d_\mathbb{D}}\left( {z,w} \right) = \log \frac{{1 + \left| {\frac{{z - w}}{{1 - z\bar w}}} \right|}}{{1 - \left| {\frac{{z - w}}{{1 - z\bar w}}} \right|}}.\]
It can also be defined on any simply connected domain $D \ne \mathbb{C}$ in the following way: If $f$ is a Riemann mapping of $\mathbb{D}$ onto $D$ and $z,w \in D$, then 
${d_D}\left( {z,w} \right) = {d_\mathbb{D}}\left( {{f^{ - 1}}\left( z \right),{f^{ - 1}}\left( w \right)} \right)$.
Also, for a set $E \subset D$, we define ${d_D}\left( {z,E} \right)= \inf \left\{ {{d_D}\left( {z,w} \right):w \in E} \right\}$.

\subsection{Quasi-hyperbolic distance}

Let $D \ne \mathbb{C}$ be a simply connected domain. The hyperbolic distance between $z_1,z_2 \in D$ can be estimated by the quasi-hyperbolic distance which is defined by
\[{\delta _D}\left( {{z_1},{z_2}} \right) = \mathop {\inf }\limits_{\gamma :{z_1} \to {z_2}} \int_\gamma  {\frac{{\left| {dz} \right|}}{{d\left( {z,\partial D} \right)}}}, \]
where the infimum ranges over all the paths $\gamma$ connecting $z_1$ to $z_2$ in $D$ and $d\left( {z,\partial D} \right)$ denotes the Euclidean distance of $z$ from $\partial D$. It is known \cite[p. 33-36]{Bea} that 
\begin{equation}\label{qh}
\frac{1}{2}{\delta _D} \le {d_D} \le 2{\delta _D}.
\end{equation}
		
\subsection{Hardy number and hyperbolic distance}

In \cite{Kar} the current author proves that the Hardy number of a simply connected domain can be found with the aid of hyperbolic distance in the following way.

\begin{theorem}\label{kara} Let $D$ be a simply connected domain containing the origin. If ${F_r } =D\cap \{ |z|=r \}$ for $r >0$, then
	\[{\rm h}\left( D \right)=\mathop {\liminf}\limits_{r \to  + \infty } \frac{{{d_D}\left( {0,F_r} \right)}}{{\log r }}.\]
\end{theorem}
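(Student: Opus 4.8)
The plan is to pass to the Riemann map and reduce the whole statement to a growth property of its maximum modulus. Normalize the Riemann map $f\colon\mathbb{D}\to D$ so that $f(0)=0$ (possible since $0\in D$), and set $M(\rho)=\max_{|\zeta|=\rho}|f(\zeta)|$. By the maximum principle $M$ is a continuous, strictly increasing bijection of $[0,1)$ onto $[0,\sup_{z\in D}|z|)$; we may assume $D$ is unbounded, since otherwise $f\in H^\infty(\mathbb{D})$, both sides equal $+\infty$, and there is nothing to prove. Because the hyperbolic distance is conformally invariant and $f^{-1}(F_r)=\{\zeta\in\mathbb{D}:|f(\zeta)|=r\}$, the monotonicity of $s\mapsto\log\frac{1+s}{1-s}$ shows that the preimage point nearest the origin is the one of smallest modulus, namely $M^{-1}(r)$, so
\[
d_D(0,F_r)=\min_{|f(\zeta)|=r} d_{\mathbb{D}}(0,\zeta)=\log\frac{1+M^{-1}(r)}{1-M^{-1}(r)}.
\]
Substituting $r=M(\rho)$ and letting $\rho\to1$ (equivalently $r\to+\infty$), the additive constants are of lower order and disappear in the limit, yielding
\[
\liminf_{r\to+\infty}\frac{d_D(0,F_r)}{\log r}=\liminf_{\rho\to1}\frac{\log\frac{1}{1-\rho}}{\log M(\rho)}=\frac{1}{\beta},\qquad \beta:=\limsup_{\rho\to1}\frac{\log M(\rho)}{\log\frac{1}{1-\rho}}.
\]
Thus it suffices to prove $h(D)=1/\beta$; that is, the Hardy number of a univalent function equals the reciprocal of the order of growth of its maximum modulus.

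For the inequality $h(D)\ge1/\beta$ I would invoke Prawitz's inequality for univalent functions \cite{Dur}, in the form $M_p^p(\rho)\le p\int_0^\rho M(s)^p\,s^{-1}\,ds$. Fix $p<1/\beta$ and choose $\varepsilon>0$ with $p(\beta+\varepsilon)<1$. By the definition of $\beta$ one has $M(s)\le(1-s)^{-(\beta+\varepsilon)}$ for $s$ close to $1$, while near $s=0$ the integrand satisfies $M(s)^p\,s^{-1}\asymp s^{p-1}$; hence $\int_0^1 M(s)^p s^{-1}\,ds<\infty$, so $\sup_\rho M_p^p(\rho)<\infty$ and $f\in H^p(\mathbb{D})$. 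Letting $p\uparrow1/\beta$ gives $h(D)\ge1/\beta$.

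For the reverse inequality $h(D)\le1/\beta$ I would produce, along a suitable sequence, a lower bound for the integral means forcing them to blow up. By the definition of $\beta$ there are $\rho_k\to1$ and points $\zeta_k$ with $|\zeta_k|=\rho_k$ and $|f(\zeta_k)|=M(\rho_k)\ge(1-\rho_k)^{-(\beta-\varepsilon)}$. The key step, and the main technical obstacle, is to show that $f$ stays comparable to its maximum on a whole arc: using classical distortion estimates for univalent functions \cite{Dur} (together with the fact that $f$ has no zero near $\zeta_k$, where $|f|$ is large, so that $\log|f|$ is harmonic on a hyperbolic disk of fixed radius about $\zeta_k$ with oscillation bounded by an absolute constant), one gets $|f(\rho_k e^{i\theta})|\ge\frac12 M(\rho_k)$ for all $\theta$ in an arc of angular length $\asymp 1-\rho_k$. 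Integrating over this arc,
\[
2\pi M_p^p(\rho_k)\ge c\,(1-\rho_k)\,M(\rho_k)^p\ge c\,(1-\rho_k)^{\,1-p(\beta-\varepsilon)},
\]
which tends to $+\infty$ whenever $p>1/(\beta-\varepsilon)$. Since $1/(\beta-\varepsilon)\to1/\beta$ as $\varepsilon\to0$, every $p>1/\beta$ makes the integral means unbounded, so $f\notin H^p(\mathbb{D})$ and hence $h(D)\le1/\beta$. Combining the two inequalities with the reduction of the first paragraph completes the proof; the delicate points to verify carefully are the uniform distortion estimate producing the arc of length $\asymp1-\rho_k$ and the interchange of the various $\liminf$ and $\limsup$ under the change of variables $r=M(\rho)$.
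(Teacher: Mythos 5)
The paper does not actually prove Theorem \ref{kara}: it is imported from \cite{Kar}, where it is obtained through harmonic measure --- roughly, $H^p$-membership of a conformal map is first characterized by the decay of the harmonic measure $\omega_{\mathbb{D}}(0,f^{-1}(F_r))$ (in the spirit of Ess\'en and Kim--Sugawa), and that harmonic measure is then compared with $e^{-d_D(0,F_r)}$ by a Beurling-type estimate. Your argument takes a genuinely different and more classical route, and it checks out. The reduction in your first paragraph is clean: since $d_{\mathbb{D}}(0,\zeta)$ depends only on $|\zeta|$ and $M$ is a continuous strictly increasing bijection of $[0,1)$ onto $[0,\sup_D|z|)$ (maximum principle), the identity $d_D(0,F_r)=\log\frac{1+M^{-1}(r)}{1-M^{-1}(r)}$ is correct, and the change of variables turns the left-hand side of the theorem into $1/\beta$ with $\beta=\limsup_{\rho\to1}\log M(\rho)/\log\frac{1}{1-\rho}$; note $\beta\le 2$ by Koebe, so no degenerate case arises. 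The theorem thus becomes the classical statement that the Hardy number of a univalent map is the reciprocal of the order of growth of its maximum modulus: Prawitz gives ${\rm h}(D)\ge 1/\beta$ exactly as you say, and your localization argument gives the converse. The one step you rightly flag as delicate --- that $|f|\ge\frac12 M(\rho_k)$ on an arc of angular length comparable to $1-\rho_k$ --- does hold and can be proved with the quasi-hyperbolic comparison (\ref{qh}) already in the paper: for $z$ in a hyperbolic ball of fixed small radius $\delta$ about the maximizing point $z_0$ one has $\log\bigl(1+|f(z)-f(z_0)|/d(f(z_0),\partial D)\bigr)\le 2d_{\mathbb{D}}(z,z_0)\le 2\delta$, while $d(f(z_0),\partial D)\le 2|f(z_0)|$ once $|f(z_0)|$ is large, so $|f(z)-f(z_0)|\le(e^{2\delta}-1)\cdot 2|f(z_0)|\le\frac12|f(z_0)|$ for suitable $\delta$; such a hyperbolic ball contains an arc of $\{|z|=\rho_k\}$ of length $\asymp 1-\rho_k$. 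What your approach buys is self-containedness within classical univalent function theory (Prawitz plus distortion); what the harmonic-measure route of \cite{Kar} buys is a framework that does not hinge on the maximum modulus and connects directly to the Ess\'en--Kim--Sugawa description of the Hardy number, which is why the hyperbolic-distance formula emerges there without the intermediate quantity $\beta$.
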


\subsection{The Stolz--Cesaro theorem}

Next, we state a generalized form of the Stolz--Cesaro theorem which we apply in Section \ref{proofs}. For the proof see \cite{Na} and \cite[p.\ 263--266]{Fur}.

\begin{theorem}\label{stolz}
	Let $\left\{b_n\right\}_{n\in\mathbb{N}}$ be a sequence of positive numbers such that $\sum\limits_{n = 1}^{ + \infty } {b_n }  =  + \infty $. For any real sequence $\left\{a_n\right\}_{n\in\mathbb{N}}$, it is true that
	\[\mathop {\limsup}\limits_{n \to  + \infty }\frac{a_1+a_2+\dots+a_n}{b_1+b_2+\dots+b_n}\le\mathop {\limsup}\limits_{n \to  + \infty }\frac{a_n}{b_n}\]
	and 
	\[\mathop {\liminf}\limits_{n \to  + \infty }\frac{a_1+a_2+\dots+a_n}{b_1+b_2+\dots+b_n}\ge\mathop {\liminf}\limits_{n \to  + \infty }\frac{a_n}{b_n}.\] 
\end{theorem}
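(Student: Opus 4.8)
The plan is to prove the $\limsup$ inequality directly from the definition and then obtain the $\liminf$ inequality by applying the $\limsup$ result to the negated sequence $\{-a_n\}$. Throughout I write $A_n=a_1+\dots+a_n$ and $B_n=b_1+\dots+b_n$, and I set $L=\limsup_{n\to\infty}a_n/b_n$. Since each $b_n>0$ and $\sum_{n\ge1}b_n=+\infty$, the partial sums $B_n$ are strictly increasing and satisfy $B_n\to+\infty$; this divergence is the only role played by the hypothesis on $\{b_n\}$, and it is exactly what forces any fixed ``tail constant'' to become negligible in the limit.

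For the first inequality I would assume $L<+\infty$, since otherwise there is nothing to prove, and fix an arbitrary $\varepsilon>0$. By the definition of $\limsup$ there is an index $N$ such that $a_n/b_n<L+\varepsilon$ for all $n>N$; using $b_n>0$ this rewrites as $a_n<(L+\varepsilon)b_n$. Summing from $N+1$ to $n$ and restoring $A_N$ gives $A_n<A_N+(L+\varepsilon)(B_n-B_N)$ for every $n>N$, whence
\[
\frac{A_n}{B_n}<(L+\varepsilon)+\frac{A_N-(L+\varepsilon)B_N}{B_n}.
\]
The numerator of the last fraction depends only on the now-fixed index $N$, so it is constant, while $B_n\to+\infty$; taking $\limsup$ as $n\to\infty$ therefore yields $\limsup_{n\to\infty}A_n/B_n\le L+\varepsilon$. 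As $\varepsilon>0$ was arbitrary, the $\limsup$ inequality follows.

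For the $\liminf$ inequality I would apply the inequality just established to the sequence $\{-a_n\}$, whose partial sums are $-A_n$ and whose denominators $b_n$ and $B_n$ are unchanged. Using the identity $\liminf_n x_n=-\limsup_n(-x_n)$ on both sides, this gives
\[
\liminf_{n\to\infty}\frac{A_n}{B_n}=-\limsup_{n\to\infty}\frac{-A_n}{B_n}\ge-\limsup_{n\to\infty}\frac{-a_n}{b_n}=\liminf_{n\to\infty}\frac{a_n}{b_n},
\]
which is precisely the second assertion.

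I do not expect a genuine obstacle here: the argument is elementary once the two degenerate regimes are dispatched (the case $L=+\infty$ is trivial, and the reduction of the $\liminf$ statement to the $\liminf$ one is purely formal via negation). The only point requiring care is bookkeeping, namely keeping $N$ fixed after it is chosen so that $A_N-(L+\varepsilon)B_N$ is a true constant, and then invoking $B_n\to+\infty$ to kill the resulting fraction. This is where the standing hypothesis $\sum b_n=+\infty$ is indispensable, and it is the crux that distinguishes this generalized form from the classical Stolz--Cesaro theorem.
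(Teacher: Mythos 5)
The first thing to note is that the paper contains no proof of Theorem \ref{stolz} to compare against: the theorem is quoted as known, with the proof deferred to \cite{Na} and \cite[p.~263--266]{Fur}. Your proposal supplies the standard direct argument, which is essentially what those references do: fix the tail bound $a_n<(L+\varepsilon)b_n$ for $n>N$, sum it, divide by $B_n$, and use $B_n\to+\infty$ (the only consequence of $b_n>0$ and $\sum b_n=+\infty$ that is needed) to make the fixed quantity $A_N-(L+\varepsilon)B_N$ negligible; then deduce the $\liminf$ inequality by applying the $\limsup$ inequality to $\{-a_n\}$. The structure and the bookkeeping are correct.

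There is, however, one uncovered case: you dispatch $L=+\infty$ as trivial and then treat $L$ as a finite real, but since $\{a_n\}$ is an arbitrary real sequence, $L=\limsup_n a_n/b_n$ can equal $-\infty$, and then the inequality $a_n/b_n<L+\varepsilon$ is meaningless while the theorem still asserts something nontrivial, namely $\limsup_n A_n/B_n=-\infty$. The repair is one line: for an arbitrary real $M>L$ (such an $M$ exists whether $L$ is finite or $-\infty$), choose $N$ with $a_n<Mb_n$ for all $n>N$, run the identical computation to get $\limsup_n A_n/B_n\le M$, and take the infimum over all such $M$. This is not a pedantic point in the context of this paper: the application in the proof of Theorem \ref{gen} invokes the $\liminf$ half with $\liminf_n \log\alpha_n/(b_n-b_{n-1})=+\infty$, and under your negation reduction that is exactly the $\limsup$ statement with $L=-\infty$, i.e.\ the case your write-up skips. (Also a small typo: ``reduction of the $\liminf$ statement to the $\liminf$ one'' should read ``to the $\limsup$ one.'') With the $M$-patch your proof is complete and correct.
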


\section{A necessary and sufficient condition}\label{condition}

In this section we give a necessary and sufficient condition for the Hardy number to be equal to infinity. First, we prove two auxiliary lemmas which give an upper and a lower estimate for ${\rm h}(C)$. 

\begin{lemma}\label{upper} Let $C$ be a comb domain of the form described in Section \ref{int}. If $K=4\log ((1+\sqrt{5})/2)$, then
		\begin{align}
		{\rm h}(C) \le \mathop {\lim \inf }\limits_{n \to  + \infty } \left( \frac{4{\sum\limits_{i = 1}^{n}\log  {\alpha_i} }}{{\log x_{n} }} + \frac{n{K}}{{\log x_{n} }}+4  \right). \nonumber
		\end{align}
\end{lemma}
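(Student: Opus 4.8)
The plan is to apply Theorem \ref{kara}, so that $\mathrm{h}(C)=\liminf_{r\to+\infty} d_C(0,F_r)/\log r$, and to bound this quantity from above by constructing an explicit cheap curve from $0$ to the circle $\{|z|=r\}$. Since the liminf over all $r$ is no larger than the limit inferior along any sequence $r_n\to+\infty$, it is enough to estimate $d_C(0,F_{x_n})$: the point $(x_n,0)$ lies on the real axis, hence in $C$, and on the circle of radius $x_n$, so $(x_n,0)\in F_{x_n}$ and $d_C(0,F_{x_n})\le d_C(0,(x_n,0))$. By the comparison (\ref{qh}) this is at most $2\delta_C(0,(x_n,0))$, and by the definition of the quasi-hyperbolic distance I bound $\delta_C(0,(x_n,0))$ by the quasi-hyperbolic length of the segment $[0,x_n]$ on the real axis, namely $\int_0^{x_n} dt/d((t,0),\partial C)$.

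The key computation is the Euclidean distance to the boundary along this segment. Writing $\rho(t)=\min_j |t-x_j|$ for the distance of $t$ to the nearest slit abscissa, I claim $d((t,0),\partial C)=\sqrt{\rho(t)^2+1}$. The closest point of the slit $\{x_j+iy:|y|\ge1\}$ to $(t,0)$ is a tip $(x_j,\pm1)$, at distance $\sqrt{(t-x_j)^2+1}$, and minimizing over $j$ gives $\sqrt{\rho(t)^2+1}$. The only other boundary component is the line $\RE z=-x_1$, which is farther away: for $t\in(x_{k-1},x_k)$ one has $\rho(t)\le t-x_{k-1}\le t$, whence $\sqrt{\rho(t)^2+1}\le t+1<t+x_1=\dist((t,0),\{\RE z=-x_1\})$, using the hypothesis $\alpha_1=x_1>1$. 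Splitting $[0,x_n]$ into the intervals $(x_{k-1},x_k)$ and exploiting the symmetry of $\rho$ about each midpoint, the integral evaluates exactly:
\[
\int_0^{x_n}\frac{dt}{\sqrt{\rho(t)^2+1}}=\sum_{k=1}^{n}2\int_0^{\alpha_k/2}\frac{ds}{\sqrt{s^2+1}}=\sum_{k=1}^{n}2\arcsinh\frac{\alpha_k}{2}.
\]

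It remains to replace $\arcsinh(\alpha_k/2)$ by $\log\alpha_k$, and this is where the constant $K$ appears. Since the golden ratio $\phi=(1+\sqrt5)/2$ satisfies $\phi-\phi^{-1}=1$, we have $\sinh(\log\phi)=1/2$, i.e. $\log\phi=\arcsinh(1/2)$, and an elementary estimate gives, for every $\alpha\ge1$,
\[
\arcsinh\frac{\alpha}{2}=\log\!\left(\frac{\alpha}{2}+\sqrt{\frac{\alpha^2}{4}+1}\right)\le\log(\phi\,\alpha)=\log\alpha+\log\phi,
\]
the inequality reducing after squaring to $1\le\alpha^2$, which holds because every $\alpha_k>1$. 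Assembling the three displays,
\[
d_C(0,F_{x_n})\le 4\sum_{k=1}^{n}\arcsinh\frac{\alpha_k}{2}\le 4\sum_{k=1}^{n}\log\alpha_k+4n\log\phi=4\sum_{k=1}^{n}\log\alpha_k+nK,
\]
and dividing by $\log x_n$ and taking the liminf yields the stated bound, the extra additive $4$ being harmless slack absorbed in passing from the point $(x_n,0)$ to the circle $F_r$ for intermediate radii. The step demanding the most care is the identification of the boundary distance along the real axis—in particular ruling out the half-plane edge $\RE z=-x_1$ by means of $\alpha_k>1$—since this is exactly what makes the real segment the economical route and renders the $\arcsinh$ integral exact; the golden-ratio estimate is then a routine one-variable inequality.
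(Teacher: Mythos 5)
Your proof is correct and follows essentially the same route as the paper's: bound $d_C(0,F_r)$ above by twice the quasi-hyperbolic length of the real segment, evaluate the resulting integral interval by interval as $2\arcsinh(\alpha_k/2)$, and convert $\arcsinh(\alpha_k/2)$ to $\log\alpha_k+K/4$ via the golden-ratio inequality, which reduces to $\alpha_k^2\ge 1$. The only (harmless) difference is that you take the liminf along the subsequence $r=x_n$, which lets you skip the paper's separate estimate $d_C(x_{n-1},r)\le 4\arcsinh r$ for intermediate radii and in fact yields the slightly stronger bound without the additive $4$.
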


\begin{proof} Let $r>0$. There exists a number $n \in \mathbb{N}$ such that $x_{n-1}<r \le x_n$. Due to the symmetry of $C$ with respect to the real axis and the uniqueness of the hyperbolic geodesic in simply connected domains, the hyperbolic geodesic between $0$ and $r$ in $C$ is the line segment $[0,r]$. Therefore, we have 
	\begin{equation}\label{sx4}
	d_C (0, r)=d_C (0, x_{n-1})+d_C (x_{n-1}, r)
	\end{equation}
	(see \cite[p. 14]{Bea}). Applying (\ref{qh}) and letting $m_{i-1}$ denote the midpoint of the interval $\left[ {x_{i-1} ,x_i } \right]$ for every $i \in \mathbb{N}$, we infer that
	\begin{align}\label{sx5}
	d_C \left( {0 ,x_{n-1} } \right) &\le 2\delta _C \left( {0 ,x_{n-1} } \right) = 2\int_{0 }^{x_{n-1} } {\frac{{dx}}{{d\left( {x,\partial C} \right)}}}  = 2\sum\limits_{i = 1}^{n-1} {\int_{x_{i-1} }^{x_{i} }  {\frac{{dx}}{{d\left( {x,\partial C} \right)}}} } \nonumber \\
	&=4 \sum\limits_{i = 1}^{n-1} {\int_{x_{i-1} }^{m_{i-1} } {\frac{{dx}}{{d\left( {x,\partial C} \right)}}} }=4 \sum\limits_{i = 1}^{n-1} {\int_{x_{i-1} }^{m_{i-1} } {\frac{{dx}}{{\sqrt {1  + \left( {x - x_{i-1} } \right)^2 } }}} } \nonumber \\
	&=4 \sum\limits_{i = 1}^{n-1} {\arcsinh \left( {{{m_{i-1} - x_{i-1} }}} \right)}=4 \sum\limits_{i = 1}^{n-1} {\arcsinh \left( {\frac{{x_{i}  - x_{i-1} }}{{2}}} \right)} \nonumber \\
	&=4\sum\limits_{i = 1}^{n-1} {\log \left( {\frac{\alpha_i}{{2}} + \sqrt {\left( {\frac{\alpha_i}{{2}}} \right)^2  + 1} } \right)}. 
	\end{align}
	Recall that the domain $C$ has the property that $ \mathop {\inf }\limits_{n \in \mathbb{N}} \alpha_n>1$,
	which implies that, for every $i\in \mathbb{N}$,
	\[\frac{\alpha_i}{{2}} > \frac{1}{{2}}.\]
	Therefore,
	\begin{equation}\nonumber
	\sqrt {\left( {\frac{\alpha_i}{{2}}} \right)^2  + 1} = \sqrt {\left( {\frac{\alpha_i}{{2}}} \right)^2  +4 \left( {\frac{1}{{2}}} \right)^2 }\le  \frac{\alpha_i}{{2}}\sqrt {5}
	\end{equation}
	and hence
	\begin{align}\label{sx6}
	\log \left( {\frac{\alpha_i}{{2}} +  \sqrt {\left( {\frac{\alpha_i}{{2}}} \right)^2 + 1} } \right) \le \log \left( {\frac{\alpha_i}{{2}}\left( {1 + \sqrt {5} } \right)} \right)=\log {{\alpha_i}} + \frac{K}{4},
	\end{align}
	where $K=4\log ((1+\sqrt{5})/2)$. Combining (\ref{sx5}) with (\ref{sx6}), we deduce that
	\begin{equation}\label{p1}
	d_C \left( {0 ,x_{n-1} } \right) \le 4\sum\limits_{i = 1}^{n-1} {\log {{\alpha_i}}}  + (n-1){K}.
	\end{equation}
	Now, in order to find an upper estimate for $d_C (x_{n-1},r)$, we consider the following cases.

Case 1: If $r \in \left( {x_{n-1} ,\frac{{x_{n-1}  + x_n }}{2}} \right]$, then 
	\begin{align}
	d_C (x_{n-1},r) &\le 2 \delta_C (x_{n-1},r)=2\int_{x_{n-1} }^r  {\frac{{dx}}{{\sqrt {1 + (x - x_{n-1} )^2 } }}} \nonumber \\
	&=2 \arcsinh \left( {r  - x_{n-1}} \right)\le 2 \arcsinh  {r}. \nonumber
	\end{align}
	
	Case 2: If $r \in \left( {\frac{{x_{n-1}  + x_{n} }}{2},x_{n} } \right]$, then 
	\begin{align}
	d_C (x_{n-1},r) &\le d_C (x_{n-1},x_{n}) \le 2 \delta_C (x_{n-1},x_n)=2\int_{x_{n-1}}^{x_n}  {\frac{{dx}}{{d(x,\partial C)}}}  \nonumber \\
	&= 4\int_{x_{n-1} }^{\frac{{x_{n-1} + x_n }}{2}} {\frac{{dx}}{{\sqrt {1 + (x - x_{n-1} )^2 } }}} = 4\arcsinh \left( {\frac{{x_{n}  - x_{n-1} }}{2}} \right)  \nonumber \\
	&\le 4\arcsinh  {r}. \nonumber
	\end{align}
	Therefore, it follows that in both cases,
	\begin{equation}\label{pp3}
	d_C (x_{n-1} ,r) \le 4\arcsinh  {r}.
	\end{equation}
	 Recall that $r>x_{n-1}$. So, by (\ref{sx4}), (\ref{p1}) and (\ref{pp3}) we derive that
	\[\frac{{d_C (0,r)}}{{\log r }} \le \frac{4\sum\limits_{i = 1}^{n-1} {\log \alpha_i}  }{{\log x_{n-1} }}+ \frac{(n-1){K}}{\log x_{n-1} } + \frac{4\arcsinh {r}}{{\log r }}.\]
	This in conjunction with Theorem \ref{kara} gives
\begin{align}
{\rm h}(C) \le \mathop {\lim \inf }\limits_{r \to  + \infty } \frac{d_C (0,r)}{\log r} \le \mathop {\lim \inf }\limits_{n \to  + \infty } \left( \frac{4{\sum\limits_{i = 1}^{n}\log  {\alpha_i} }}{{\log x_{n} }} + \frac{n{K}}{{\log x_{n} }}+4  \right) \nonumber
\end{align}
and the proof is complete.
\end{proof}

\begin{lemma}\label{lower} Let $C$ be a comb domain of the form described in Section \ref{int}. Then
	\begin{align}
	{\rm h}(C) \ge \mathop {\lim \inf }\limits_{n \to  + \infty } \frac{{\sum\limits_{i = 1}^{n}\log  {\alpha_i} }}{{\log x_{n} }}-1. \nonumber
	\end{align}
\end{lemma}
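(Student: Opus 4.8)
The plan is to run the argument of Lemma \ref{upper} in reverse: by Theorem \ref{kara} it suffices to bound $d_C(0,F_r)$ from below for all large $r$ and then pass to the liminf, and by (\ref{qh}) it is enough to work with $\delta_C$, since $d_C\ge\tfrac12\delta_C$. The essential new feature is that the infimum defining $d_C(0,F_r)$ ranges over the whole circle $\{|z|=r\}$, so I need a lower bound for $\delta_C(0,w)$ valid uniformly for every $w$ with $|w|=r$, and not only for the real point $r$ whose geodesic was used for the upper bound.

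Everything rests on one elementary geometric observation. Writing $\rho(t)=\dist(t,\{x_n:n\ge 0\})$ for the distance from $t\in\mathbb{R}$ to the nearest abscissa of a ray, the nearest boundary point at the same height shows that $\dist(z,\partial C)\le\sqrt{\rho(\RE z)^2+1}$ for every $z\in C$, while for $z$ with $|\IM z|>1$ lying in a chamber $x_{k-1}<\RE z<x_k$ one has the sharper bound $\dist(z,\partial C)\le\min(\RE z-x_{k-1},x_k-\RE z)\le\alpha_k/2$. These produce two complementary estimates. For the \emph{horizontal} one I project an arbitrary path $\gamma$ from $0$ to $w$ onto the real axis and use $|dz|\ge|d(\RE z)|$; since $\RE z$ sweeps all of $[0,\RE w]$, this gives $\delta_C(0,w)\ge\int_0^{\RE w}(\rho(t)^2+1)^{-1/2}\,dt$, and the integral is evaluated chamber by chamber exactly as in (\ref{sx5})--(\ref{sx6}): if $x_{k-1}<\RE w\le x_k$ it is at least $2\sum_{i=1}^{k-1}\arcsinh(\alpha_i/2)\ge 2\sum_{i=1}^{k-1}\log\alpha_i$, the partial contribution of the $k$-th chamber adding a further $\arcsinh$-term that grows like $\log(\RE w)$. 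For the \emph{vertical} one I use that chambers communicate only through the windows $\{|y|<1\}$, so the final ascent of $\gamma$ inside the chamber containing $w$ forces $\delta_C(0,w)\ge 2(|\IM w|-1)/\alpha_k$ whenever $|\IM w|>1$.

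To combine the two I set $S_k=\sum_{i=1}^k\log\alpha_i$ and $L=\liminf_{n}S_n/\log x_n-1$, assuming $L$ finite (if $L=+\infty$ the two estimates already force $d_C(0,F_r)/\log r\to+\infty$, which is the conclusion $h(C)=+\infty$). Fix $\varepsilon>0$. By definition of the liminf, $S_n\ge(L+1-\varepsilon)\log x_n$ for all large $n$, and because $\alpha_n\le x_n$ this yields $S_{n-1}=S_n-\log\alpha_n\ge(L-\varepsilon)\log x_n$; this single inequality is the source of the $-1$ in the statement. Now take $w\in F_r$ with $x_{k-1}<\RE w\le x_k$ and choose $n$ with $x_{n-1}<r\le x_n$. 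If $\tfrac12\int_0^{\RE w}(\rho^2+1)^{-1/2}\,dt\ge(L-\varepsilon)\log r$ we are done; note this is automatic when $\RE w$ is close to $x_{n-1}$, since then the integral is essentially $2S_{n-1}\ge 2(L-\varepsilon)\log r$. Otherwise $\RE w$ is forced to be small relative to $r$, hence $|\IM w|$ is comparable to $r$ while the chamber containing $w$ is narrow, and the vertical estimate takes over to deliver the same bound.

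The main obstacle is to make this dichotomy genuinely quantitative and uniform in $r$: one must show that the failure of the horizontal estimate, $\tfrac12\int_0^{\RE w}(\rho^2+1)^{-1/2}\,dt<(L-\varepsilon)\log r$, simultaneously forces $x_{k-1}<r^{\theta}$ for some $\theta<1$ (by applying the liminf inequality at the index $k-1$) and confines $\RE w-x_{k-1}$, so that $|\IM w|=\sqrt{r^2-\RE w^2}$ is of order $r$ and $\alpha_k\le x_k$ is small enough to make $(|\IM w|-1)/\alpha_k\ge(L-\varepsilon)\log r$; the case in which $\alpha_k$ itself is large must instead be absorbed into the partial, $\log(\RE w)$-sized term of the horizontal estimate. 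Once the dichotomy is established, $d_C(0,F_r)\ge(L-\varepsilon)\log r$ for all large $r$, so Theorem \ref{kara} gives $h(C)\ge L-\varepsilon$, and letting $\varepsilon\to 0$ completes the proof.
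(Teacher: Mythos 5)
Your two basic estimates are exactly the ones the paper uses: the horizontal one (project a path onto the real axis, integrate $(\rho(t)^2+1)^{-1/2}$ chamber by chamber to get $d_C(0,w)\ge\tfrac12\delta_C(0,w)\ge\sum_{i=1}^{k-1}\log\alpha_i$) and the vertical one (the final ascent inside a chamber of width $\alpha_k$ costs at least $(|\IM w|-1)/\alpha_k$ in $d_C$), and your derivation of the $-1$ from $\alpha_n\le x_n$ is the same as the paper's. But the step you yourself flag as ``the main obstacle'' is the actual content of the lemma, and the route you propose for it does not work as stated. The failure of the horizontal estimate, $S_{k-1}+\arcsinh(\cdots)<(L-\varepsilon)\log r$, combined with $S_{k-1}\ge(L-\varepsilon)\log x_k$, yields only $x_k<r$; it does \emph{not} force $x_{k-1}<r^\theta$ for some $\theta<1$, nor that $|\IM w|\asymp r$. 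Concretely, the minimizing point $w$ can sit just above a ray with $x_{k-1}=r(1-\log^{-10}r)$, where the horizontal bound falls short of $(L-\varepsilon)\log r$ only by $o(1)$ while $|\IM w|=o(r)$ and the vertical bound with denominator $\alpha_k/2$ is useless. The correct dichotomy runs in the other direction and needs an $\varepsilon$ of room: one must show that whenever the \emph{vertical} estimate is too weak to give $(L-\varepsilon)\log r$, the chamber is necessarily so far out ($x_k\gtrsim r/\log r$) that $\log x_k=(1-o(1))\log r$ and the horizontal estimate already delivers $(L-\varepsilon)(1-o(1))\log r$. You have not proved this, and your sketch points at the wrong implication.

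The paper closes exactly this gap with a self-referential contradiction that you may find worth comparing. Assuming ${\rm h}(C)<+\infty$ (the lemma being trivial otherwise) and setting $k=2/(3{\rm h}(C))$, it shows that if the chamber containing the minimizing component of $F_{r_n}$ satisfied $x_{i_n+1}\le k\,r_n/\log r_n$ for infinitely many $n$, the vertical estimate would give ${\rm h}(C)\ge 1/k=\tfrac32{\rm h}(C)$, a contradiction; hence $x_{i_n+1}>k\,r_n/\log r_n$ eventually, $\log x_{i_n+1}/\log r_n\to1$, and the horizontal estimate alone finishes the proof. Note also that your parenthetical claim that the case $L=+\infty$ is ``already forced'' by the two estimates begs the same question; conditioning on ${\rm h}(C)=+\infty$ instead, as the paper does, is what makes that case genuinely trivial. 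So: right estimates, right source of the $-1$, same overall architecture as the paper, but the decisive combination step is missing and the proposed way to supply it would fail.
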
	

\begin{proof} If ${F_r } =C\cap \{ |z|=r \}$, by Theorem \ref{kara} we have
\begin{align}
{\rm h}(C) = \mathop {\lim \inf }\limits_{r \to  + \infty }\frac{d_C (0,F_r)}{\log r}=\mathop {\lim }\limits_{n \to  + \infty }\frac{d_C (0,F_{r_n})}{\log r_n}, \nonumber
\end{align}
where $\left\{r_n\right\}$ is an increasing sequence of positive numbers. The hyperbolic distance $d_C (0,F_{r_n})$ is attained on some component of $F_{r_n}$ lying in the vertical strip $\left\{z:x_{i_n}<{\rm{Re}} z<x_{{i_n}+1} \right\}$, where $\{x_{i_n}\}$ is a subsequence of $\{x_n\}$. If we denote this component by $F_{r_n}^{x_{i_n}}$ then
\begin{align}\label{isot}
{\rm h}(C)=\mathop {\lim }\limits_{n \to  + \infty }\frac{d_C (0,F_{r_n}^{x_{i_n}})}{\log r_n}.
\end{align}
Since $C$ is symmetric with respect to the real axis, without loss of generality, we suppose that $F_{r_n}^{x_{i_n}}$ lies on the upper half-plane or intersects the positive real axis (see Fig.\ \ref{fig}). If ${\rm h}(C)=+\infty$, the result is trivial. Hence, we suppose that ${\rm h}(C)<+\infty$ and take the following cases.

\begin{figure} 
	\begin{center}
		\includegraphics[scale=0.6]{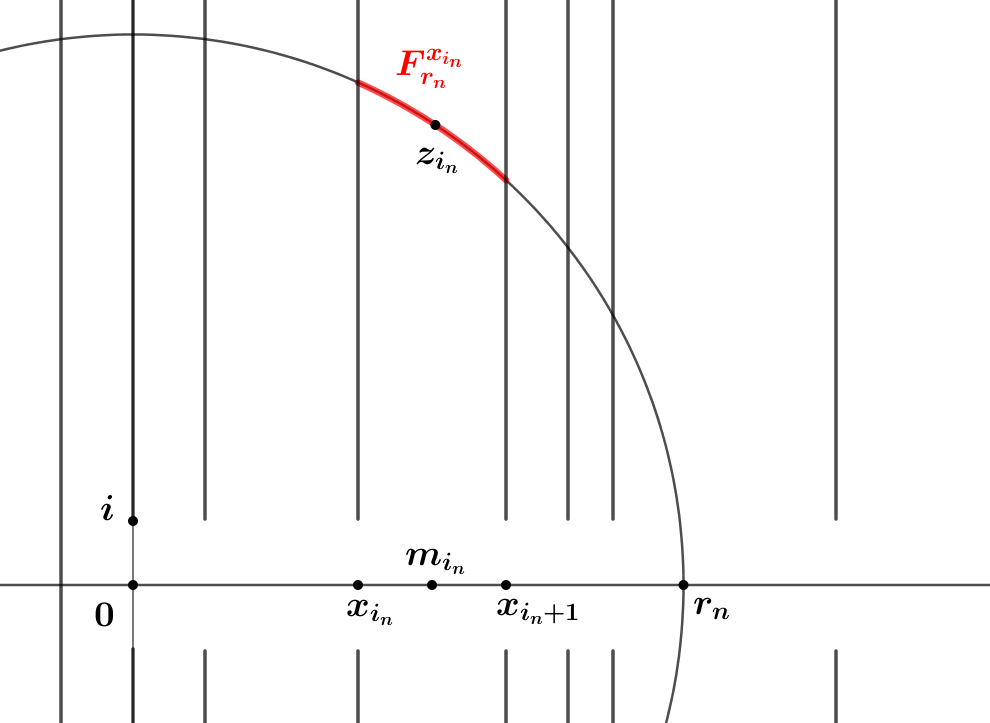}
		\vspace*{2mm}
		\caption{The component $F_{r_n}^{x_{i_n}}$.}
		\label{fig}
	\end{center}
\end{figure}

Case 1: For infinitely many $n$, $F_{r_n}^{x_{i_n}}$ is the component of $F_{r_n}$ containing $r_n$. By passing to a subsequence we assume that this is the case for all $n$. The hyperbolic geodesic between $0$ and $F_{r_n}^{x_{i_n}}$ passes from some point $u_{i_n}$ of the line segment $\{x_{i_n}+iy:-1<y<1\}$. If $m_{j-1}$ denotes the midpoint of the interval $[x_{j-1},x_j]$, then we have  
\begin{align}\label{sx2}
d_C \left( {0 ,F_{r_n}^{x_{i_n}} } \right) &\ge d_C \left( {0 ,u_{i_n}} \right)\ge \frac{1}{2}\delta _C \left( {0 , u_{i_n} } \right) \ge \frac{1}{2}\int_{0}^{x_{i_n} } {\frac{{dx}}{{d\left( {x,\partial C} \right)}}} \nonumber \\
&=\frac{1}{2}\sum\limits_{j = 1}^{i_n} {\int_{x_{j-1} }^{x_j } {\frac{{dx}}{{d\left( {x,\partial C} \right)}}} }=\sum\limits_{j = 1}^{i_n} {\int_{x_{j-1} }^{m_{j-1} } { {\frac{{dx}}{{\sqrt {1  + \left( {x - x_{j-1} } \right)^2 } }}} }} \nonumber\\	
&=\sum\limits_{j = 1}^{i_n} {\arcsinh \left(\frac{{{x_{j}  - x_{j-1} }}}{2} \right)} =\sum\limits_{j = 1}^{i_n} {\log \left( {\frac{\alpha_j}{2} + \sqrt {\left(\frac{\alpha_j}{2}\right)^2  + 1} } \right)} \nonumber \\
&\ge\sum\limits_{j = 1}^{i_n} {\log \alpha_j}.
\end{align}
Since $x_{i_n}<r_n<x_{i_n+1}$, by (\ref{sx2}) and (\ref{isot}) it follows that
\begin{align}\label{way1}
{\rm h}(C) &\ge \mathop {\lim \inf }\limits_{n \to  + \infty } \frac{\sum\limits_{j = 1}^{i_n} {\log \alpha_j}}{{\log x_{i_n+1} }}=\mathop {\lim \inf }\limits_{n \to  + \infty } \left(\frac{\sum\limits_{j = 1}^{i_n+1} {\log \alpha_j}}{{\log x_{i_n+1} }}-\frac{\log \alpha_{i_n+1}}{\log x_{i_n+1}}\right) \nonumber \\
&\ge\mathop {\lim \inf }\limits_{n \to  + \infty }\frac{\sum\limits_{j = 1}^{i_n+1} {\log \alpha_j}}{{\log x_{i_n+1} }}-1\ge\mathop {\lim \inf }\limits_{n \to  + \infty } \frac{\sum\limits_{j = 1}^{n} {\log \alpha_j}}{{\log x_{n} }}-1.
\end{align}

Case 2: For infinitely many $n$, $F_{r_n}^{x_{i_n}}$ is not the component of $F_{r_n}$ containing $r_n$. By passing to a subsequence we suppose that this is the case for all $n$. First, suppose that for $k=2/(3{\rm h}(C))$,
\begin{align}\label{case1}
x_{{i_n}+1}\le k\frac{r_n}{\log r_n }
\end{align}
for infinitely many $n$. The hyperbolic geodesic between $0$ and $F_{r_n}^{x_{i_n}}$ passes from some point $v_{i_n}$ of the line segment $(x_{i_n}+i,x_{i_n+1}+i)$ and some point $z_{i_n}$ of the line segment $(x_{i_n}+i\sqrt{r_n^2-x_{i_n+1}^2},x_{i_n+1}+i\sqrt{r_n^2-x_{i_n+1}^2})$. Thus, we have   
\begin{align}
d_C (0,F_{r_n}^{x_{i_n}})&\ge d_C (v_{i_n},z_{i_n})\ge \frac{1}{2}\delta_C (v_{i_n},z_{i_n})\ge \frac{1}{2}\int_1^{\sqrt{r_n^2-x_{i_n+1}^2}} \frac{dx}{d(x,\partial C)} \nonumber \\
&=\frac{\sqrt{r_n^2-x_{i_n+1}^2}-1}{x_{{i_n}+1}-x_{i_n}}\ge \frac{\sqrt{r_n^2-x_{i_n+1}^2}-1}{x_{{i_n}+1}}  \nonumber \\
&\ge \frac{1}{k}\frac{\log r_n}{r_n}\left(\sqrt{r_n^2-k^2\left( \frac{r_n}{\log r_n}\right)^2}-1\right), \nonumber
\end{align}
where we applied (\ref{case1}). This in combination with (\ref{isot}) implies that
\[{\rm h}(C)\ge\frac{1}{k}\mathop {\lim \inf }\limits_{n \to  + \infty }\frac{\sqrt{r_n^2-k^2\left( \frac{r_n}{\log r_n}\right)^2}-1}{r_n} =\frac{1}{k}=\frac{3}{2}{\rm h}(C),\]
which is a contradiction. Therefore,
\begin{align}\label{case2}
x_{{i_n}+1}> k\frac{r_n}{\log r_n }
\end{align}
for all but finitely many $n$. So, working as in Case 1, we have
\begin{align}\nonumber
d_C (0,F_{r_n}^{x_{i_n}})\ge \sum\limits_{j = 1}^{i_n} {\log \alpha_j}.
\end{align}
By this and (\ref{isot}), it follows that
\begin{align}\nonumber
{\rm h}(C) &\ge \mathop {\lim \inf }\limits_{n \to  + \infty } \frac{\sum\limits_{j = 1}^{i_n} {\log \alpha_j}}{{\log r_n }}=\mathop {\lim \inf }\limits_{n \to  + \infty } \left(\frac{\sum\limits_{j = 1}^{i_n} {\log \alpha_j}}{{\log x_{i_n+1}}}\frac{\log x_{i_n+1}}{\log r_n} \right) \nonumber \\
&\ge \mathop {\lim \inf }\limits_{n \to  + \infty } \left(\frac{\sum\limits_{j = 1}^{i_n} {\log \alpha_j}}{{\log x_{i_n+1}}}\frac{\log k +\log r_n -\log (\log r_n)}{\log r_n} \right) \nonumber \\
&=\mathop {\lim \inf }\limits_{n \to  + \infty }\frac{\sum\limits_{j = 1}^{i_n} {\log \alpha_j}}{{\log x_{i_n+1}}}\ge\mathop {\lim \inf }\limits_{n \to  + \infty } \frac{\sum\limits_{j = 1}^{n} {\log \alpha_j}}{{\log x_{n} }}-1, \nonumber 
\end{align}
where we applied (\ref{case2}) and (\ref{way1}). Consequently, in any case we obtain the desired result.
\end{proof}

Next, we prove our main theorem.

\begin{proof}[Proof of Theorem \ref{th2}]
	Suppose that ${\rm h}(C)=+\infty$. If 
	\[\mathop {\lim \inf }\limits_{n \to  + \infty } \frac{n}{{\log x_{n} }}<+\infty,\]
	then by Lemma \ref{upper} we deduce that
	\[\mathop {\lim}\limits_{n \to  + \infty } \frac{{\sum \limits_{i = 1}^{n} \log  {{{ \alpha_i }}}}}{{\log x_{n} }}=+\infty.\]
	Now, suppose that
	\[\mathop {\lim \inf }\limits_{n \to  + \infty } \frac{n}{{\log x_{n} }}=+\infty.\]
	Recall that $\mathop {\inf }\limits_{n \in \mathbb{N}} \alpha_n = l > 1$. So, we have
	\[\frac{{\sum \limits_{i = 1}^{n} \log  {{{\alpha_i }}}}}{{\log x_{n} }}> \frac{n}{\log x_{n} }\log l\]
	and thus
	\[\mathop {\lim }\limits_{n \to  + \infty } \frac{{\sum \limits_{i = 1}^{n} \log  {{{ \alpha_i }}}}}{{\log x_{n} }}=+\infty\]
	in both cases. The other direction is direct by Lemma \ref{lower}.  
\end{proof}

\section{Consequent results}\label{proofs}

In this section we prove several results  derived by Theorem \ref{th2}. They are all stated in Section \ref{int}. First, we show that if the sequence $\alpha_n$ grows at a subexponential rate, then the Hardy number is equal to infinity.  

\begin{proof}[Proof of Theorem \ref{th3}]
Let $A_n  = \mathop {\max }\limits_{1 \le j \le n} \alpha _j$. First, we prove that our assumption implies that
\[\mathop {\lim}\limits_{n \to  + \infty }\frac{\log A_n}{n}=0.\]
Suppose, on the contrary, that it is false. Then there are a constant $\delta>0$ and  a subsequence $\left\{ {A_{k_n} } \right\}_{_{n \in \mathbb{N}} } $ of $\left\{ {A_n } \right\}_{_{n \in \mathbb{N}} } $ such that, for every $n \in \mathbb{N}$,
\[\frac{\log A_{k_n}}{k_n}\ge \delta.\]
For every $n \in \mathbb{N}$ there is an $m_n \in \mathbb{N}$  such that $A_{k_n}=\alpha_{m_n}$ and $1\le m_n \le k_n$.

Case 1: If there is a constant $K>0$ such that $m_n \le K$ for every $n \in \mathbb{N}$, then
\[0\le \frac{\log A_{k_n}}{k_n}= \frac{\log \alpha_{m_n}}{k_n} \le \frac{\mathop {\max }\limits_{1 \le i \le K} \log \alpha _i }{k_n}. \] 
So, taking limits as $n \to +\infty$, we derive that
\[\mathop {\lim}\limits_{n \to  + \infty }\frac{\log A_{k_n}}{k_n}=0,\]
which is a contradiction.
 
Case 2: If $m_n \to +\infty$, then there is a subsequence $\left\{ {{m_{l_n}} } \right\}_{_{n \in \mathbb{N}} } $ such that $m_{l_n} \to +\infty$ and $m_{l_n}$ is strictly increasing with respect to $n$. Thus, 
\[\delta \le \frac{\log A_{k_{l_n}}}{k_{l_n}} = \frac{\log \alpha_{m_{l_n}}}{k_{l_n}}=\frac{\log \alpha_{m_{l_n}}}{m_{l_n}} \frac{{m_{l_n}}}{k_{l_n}}\le \frac{\log \alpha_{m_{l_n}}}{m_{l_n}}. \]
Taking limits as $n \to +\infty$, we infer that $\delta \le 0$, which is a contradiction. Therefore,
\begin{equation}\label{ap}
\mathop {\lim}\limits_{n \to  + \infty }\frac{\log A_n}{n}=0.
\end{equation} 
Recall that ${\inf }\alpha_n=l>1$. Since $\alpha _n  \le A_n $ for every $n \in \mathbb{N}$ and $\left\{ {A_n } \right\}_{_{n \in \mathbb{N}} } $ is an increasing sequence, we have
\begin{equation} \nonumber
\frac{{\sum \limits_{i = 1}^{n} \log  {{{  {\alpha_{i} } }}}}}{{\log \sum \limits_{i = 1}^{n}  {{{  {\alpha_{i} } }}} }}\ge \frac{n \log l}{\log \sum \limits_{i = 1}^{n}  {{{  {A_{i} } }}}} \ge \frac{n \log l}{\log n + \log A_n}=\frac{\log l}{\frac{\log n}{n}+\frac{\log A_n}{n}}.
\end{equation}
Taking limits as $n \to +\infty$, by (\ref{ap}) we deduce that
\[\mathop {\lim }\limits_{n \to  + \infty } \frac{{\sum \limits_{i = 1}^{n} \log  {{{  {\alpha_{i} } }}}}}{{\log \sum \limits_{i = 1}^{n}  {{{  {\alpha_{i} } }}} }} =+\infty.\]
Thus, Theorem \ref{th2} implies that ${\rm h}(C)=+\infty$.
\end{proof}

The following corollary of Theorem \ref{th3} is the corollary of Theorem 4 in \cite[p.\ 3]{Bou}. Let $\left\{x_n\right\}_{n\in \mathbb{Z}}$ be an increasing sequence of distinct real numbers without accumulation point in $\mathbb{R}$ and $\left\{c_n\right\}_{n\in \mathbb{Z}}$ be an associated sequence of positive numbers, and let
\[D=\mathbb{C}\backslash \bigcup\limits_{n \in \mathbb{Z}} {\left\{ {x_n  + iy_n: |y_n| \ge c_n} \right\}}.\]

\begin{corollary}
Let  $\alpha_n=x_n-x_{n-1}$. Suppose that $
\mathop {\inf }\limits_{n \in \mathbb{Z}} \alpha_n >0$ and $\left\{ c_n \right\}_{n \in \mathbb{Z}}$ is bounded. If
	\[\sum\limits_{j = 1}^{ + \infty } {(\mathop {\max }\limits_{|n| \le j} \alpha _n^2 )\theta ^j }  <  + \infty \]
	for every $\theta \in (0,1)$, then ${\widetilde h}(D)=+\infty$.
\end{corollary}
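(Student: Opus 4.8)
The plan is to reduce the statement to Theorem \ref{th3} by first re-expressing the summability hypothesis as a subexponential growth condition on the gaps, and then invoking the reductions of Section \ref{int} to pass from the two-sided domain $D$ to one-sided combs of the form (\ref{des}).

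By the identity (\ref{brownian}) it suffices to prove $h(D) = +\infty$. Write $A_j = \max_{|n| \le j} \alpha_n$, so that $A_j$ is nondecreasing and, since $\inf_n \alpha_n > 0$, bounded below by a positive constant. The power series $\sum_j A_j^2 \theta^j$ has nonnegative coefficients, so it converges for every $\theta \in (0,1)$ precisely when its radius of convergence is at least $1$; by the root test this is equivalent to $\limsup_j (A_j^2)^{1/j} \le 1$, i.e. $\limsup_j \frac{\log A_j}{j} \le 0$ (the square is harmless, since $\limsup_j (A_j^2)^{1/j} = (\limsup_j A_j^{1/j})^2$). Combined with the lower bound $\frac{\log A_j}{j} \ge \frac{\log(\inf_n \alpha_n)}{j} \to 0$, this yields $\lim_j \frac{\log A_j}{j} = 0$. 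I would then transfer this to the one-sided gap sequences: for $n \ge 1$ the term $\alpha_n$ appears in the maximum defining $A_n$, so $\alpha_n \le A_n$ and hence $\limsup_n \frac{\log \alpha_n}{n} \le 0$, while $\inf_n \alpha_n > 0$ forces $\liminf_n \frac{\log \alpha_n}{n} \ge 0$; thus $\lim_n \frac{\log \alpha_n}{n} = 0$. The same argument applied to the reflected sequence $\beta_k = \alpha_{1-k}$, which satisfies $\beta_k \le A_k$, gives $\lim_k \frac{\log \beta_k}{k} = 0$ for the left-hand gaps.

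Finally I would run the reductions of Section \ref{int}. Since $\{c_n\}$ is bounded above, $D \subseteq D_{M_0}$ with $M_0 = \sup_n c_n$, so $h(D) \ge h(D_{M_0})$ and it suffices to prove $h(D_{M_0}) = +\infty$ for the constant-cutoff comb $D_{M_0}$. Splitting $D_{M_0}$ into its right and left parts and reflecting the left one reduces matters, via Proposition 8 of \cite{Bou} and (\ref{brownian}), to two combs of the form (\ref{des}) after the affine normalization fixing the cutoff to $1$ and the gap infimum above $1$. The growth condition $\lim \frac{\log \alpha_n}{n} = 0$ is invariant under these normalizations, since scaling the gaps by $\lambda$ only adds $\frac{\log \lambda}{n} \to 0$, so each of the two combs satisfies the hypothesis of Theorem \ref{th3}. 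Applying Theorem \ref{th3} to each shows that both one-sided Hardy numbers equal $+\infty$, whence $h(D_{M_0}) = +\infty$ and therefore $\widetilde h(D) = h(D)/2 = +\infty$.

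The main obstacle is making the reduction machinery genuinely applicable: the corollary's hypotheses ($\inf_n \alpha_n > 0$ and $\{c_n\}$ merely bounded) are weaker than the standing assumptions on $C$ in (\ref{des}) (constant cutoff $1$ and $\inf_n \alpha_n > 1$), so the care lies in verifying that the sandwiching, the independence of the Hardy number from the constant cutoff, and the normalization to standard form can all be carried out while preserving the subexponential growth of the gaps. By contrast, the analytic core—the root-test reformulation of the summability hypothesis into $\lim_n \frac{\log \alpha_n}{n} = 0$—is short, and once this growth condition is in hand the conclusion follows immediately from Theorem \ref{th3}.
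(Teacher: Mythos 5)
Your proposal is correct and follows essentially the same route as the paper: reduce to a constant-cutoff comb by sandwiching, split into the two one-sided combs via Proposition 8 of \cite{Bou} and (\ref{brownian}), convert the summability hypothesis into $\lim_n \frac{\log\alpha_n}{n}=0$, and invoke Theorem \ref{th3}. The only cosmetic difference is that you extract the growth condition by a root-test/radius-of-convergence argument on the $\max_{|n|\le j}\alpha_n^2$ series, whereas the paper simply notes that the terms $\alpha_n^2\theta^n$ must tend to zero for each $\theta$; both yield the same conclusion.
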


\begin{proof}
	By assumption, there is a constant $c>0$ such that $c_n\le c$ for every $n\in \mathbb{Z}$. Thus,
	\[D\subseteq \mathbb{C}\backslash \bigcup\limits_{n \in \mathbb{Z}} {\left\{ {x_n  + iy: |y| \ge c} \right\}}:=D_c\]
	and ${\rm h}(D)\ge {\rm h}(D_c)$. By this and (\ref{brownian}), it suffices to prove that ${\rm h}(D_c)=+\infty$ or, equivalently, ${\rm h}(D_c^-)={\rm h}(D_c^+)=+\infty$ (see Section \ref{int}). Without loss of generality, we suppose that $c=1$ and $\inf \alpha_n >1$. We have
	\[ \sum\limits_{j = 1}^{ + \infty } {\alpha _j^2 \theta ^j } \le \sum\limits_{j = 1}^{ + \infty } {(\mathop {\max }\limits_{|n| \le j} \alpha _n^2 )\theta ^j }  <  + \infty \]
	for every $\theta \in (0,1)$. This implies that, for every $\theta \in (0,1)$,
	\[\mathop {\lim }\limits_{n \to  + \infty } \alpha _n^2 \theta ^n  = 0\]
	and hence for every $\theta \in (0,1)$ there is an $n_0(\theta) \in \mathbb{N}$ such that for $n\ge n_0$,
	\[\alpha _n \theta ^{n/2} <1\]
	or, equivalently,
	\[\frac{\log \alpha_n}{n}<\frac{1}{2}\log \frac{1}{\theta}.\] 
	Set $\varepsilon=(1/2)\log (1/\theta)$. So, for every $\varepsilon>0$ there is an $n_0 (\varepsilon) \in \mathbb{N}$ such that for $n\ge n_0$,
	\[\frac{\log \alpha_n}{n}<\varepsilon.\]
	By Theorem \ref{th3}, we deduce that ${\rm h}(D_c^+)=+\infty$. Working with $\alpha_{n}$ for $n<0$ in the same way as above, we infer that ${\rm h}(D_c^-)=+\infty$ and thus, it follows that ${\rm h}(D_c)=+\infty$. 
\end{proof}

Next, we prove Theorem \ref{gen} which implies that if the sequence $\log \alpha_n$ grows at a subexponential rate and there are no wild oscillations of $\alpha_n$, then the Hardy number is equal to infinity.

\begin{proof}[Proof of Theorem \ref{gen}]
	Since $\left\{ {b_n } \right\}_{n \in \mathbb{N}} $ is an increasing sequence, we have
	\begin{align}\label{logsum}
	\log \sum \limits_{i = 1}^{n}  {{{  {\alpha_{i} } }}} \le \log \sum \limits_{i = 1}^{n}  {e^{b_i}}\le \log \left( ne^{b_n} \right) =\log n +b_n.
	\end{align}
	By assumption, $\mathop {\inf }\limits_{n>1} (b_n-b_{n-1})=r>0$. This implies that
	\begin{equation}\label{nbn}
	b_n=\sum \limits_{i = 2}^{n}(b_i-b_{i-1})+b_1\ge (n-1)r+b_1.
	\end{equation}
	So, for every $n \in \mathbb{N}$, we have
	\[0\le \frac{\log n}{b_n} \le \frac{\log n}{(n-1)r+b_1}\]
	and thus
	\[\mathop {\lim }\limits_{n \to  + \infty }\frac{\log n}{b_n}=0.\]
	By this and (\ref{logsum}), we obtain the following estimates
	\begin{align}
	\mathop {\liminf }\limits_{n \to  + \infty }  \frac{{\sum \limits_{i = 1}^{n} \log  {{{  {\alpha_{i} } }}}}}{{\log \sum \limits_{i = 1}^{n}  {{{  {\alpha_{i} } }}} }}&\ge \mathop {\liminf }\limits_{n \to  + \infty } \left( \frac{\sum \limits_{i = 1}^{n} \log  {{{  {\alpha_{i} } }}}}{b_n}\frac{b_n}{\log n+b_n}\right) \nonumber \\
	&=\mathop {\liminf }\limits_{n \to  + \infty }  \frac{\sum \limits_{i = 1}^{n} \log  {{{  {\alpha_{i} } }}}}{b_1+\sum \limits_{i = 2}^{n} {{{ (b_i-b_{i-1}) }}}}  \nonumber \\
	&\ge \mathop {\liminf }\limits_{n \to  + \infty } \frac{\log \alpha_n}{b_n-b_{n-1}}=+\infty. \nonumber
	\end{align}
	In the last inequality we applied Theorem \ref{stolz}. Therefore, Theorem \ref{th2} implies that ${\rm h}(C)=+\infty$.
\end{proof}

Next, we prove that the condition of Theorem \ref{gen} is sharp. 

\begin{proof}[Proof of Theorem \ref{ex}]
Fix a $c>1$ and let $\left\{ {b_{k_m } } \right\}_{m \in \mathbb{N}} $ be a subsequence of $\left\{ {b_{m} } \right\}_{m \in \mathbb{N}} $ such that
\begin{equation}\label{sum}
b_{k_m }  \ge \sum\limits_{i = 1}^{m - 1} {b_{k_i } }
\end{equation} 
for every $m \ge 2$. Moreover, we observe that (\ref{nbn}) implies that
\begin{align}\label{bkm}
\frac{k_m}{b_{k_m}} \le \frac{1}{r}+\frac{r-b_1}{rb_{k_m} }.
\end{align}
We consider a comb domain with 
\[\alpha _n  = \left\{ \begin{array}{l}
c,\,\,n \notin \left\{k_m:m\in \mathbb{N} \right\}  \\ 
e^{b_{k_m } } ,\,\,n=k_m\,\,{\text {for some} }\,\,m\in\mathbb{N} \\ 
\end{array} \right..\]
Applying (\ref{sum}) and (\ref{bkm}), we have the following estimates 
\begin{align}
I_{k_m }  &:= \frac{{\sum\limits_{i = 1}^{k_m} {\log \alpha _i } }}{{\log \sum\limits_{i = 1}^{k_m} {\alpha _i } }}=\frac{{(k_m  - m)\log c + \sum\limits_{i = 1}^m {b_{k_i } } }}{{\log \left( {(k_m  - m)c + \sum\limits_{i = 1}^m {e^{b_{k_i } } } } \right)}} \nonumber \\
&\le \frac{k_m \log c +2b_{k_m}}{b_{k_m}} \le \left(\frac{1}{r}+\frac{r-b_1}{rb_{k_m} }\right)\log c + 2. \nonumber
\end{align} 
This implies that
\[\mathop {\liminf }\limits_{n \to  + \infty } \frac{{\sum \limits_{i = 1}^{n} \log  {{{  {\alpha_{i} } }}}}}{{\log \sum \limits_{i = 1}^{n}  {{{  {\alpha_{i} } }}} }} \le \mathop {\lim \inf }\limits_{m \to  + \infty } I_{k_m } \le 2+\frac{1}{r} \log c\]
and hence by Theorem \ref{th2} we derive that ${\rm h}(C)<+\infty$. Finally, it follows that
\begin{align}\nonumber
\mathop {\liminf }\limits_{n \to  + \infty } \frac{{\log \alpha _n }}{{b_n  - b_{n - 1} }} \le \mathop {\liminf }\limits_{m \to  + \infty } \frac{{\log \alpha _{k_m+1} }}{{b_{k_m+1}  - b_{k_m} }}\le \frac{\log c}{r}<+\infty 
\end{align}
and the proof is complete.
\end{proof}

Finally, we prove that if $\alpha_n$ is comparable to $e^{e^n}$, then the Hardy number is finite.

\begin{proof}[Proof of Theorem \ref{last}]
By assumption there are constants $c_1,c_2>0$ such that, for every $n\in \mathbb{N}$,
\[c_1e^{e^n}\le\alpha_n\le c_2e^{e^n}.\]
So, it follows that
\begin{align}
\mathop {\liminf }\limits_{n \to  + \infty } \frac{{\sum \limits_{i = 1}^{n} \log  {{{  {\alpha_{i} } }}}}}{{\log \sum \limits_{i = 1}^{n}  {{{  {\alpha_{i} } }}} }}&\le \mathop {\liminf }\limits_{n \to  + \infty } \frac{{\sum \limits_{i = 1}^{n} \log  {{{  {\alpha_{i} } }}}}}{{\log \alpha_n }}\le \mathop {\liminf }\limits_{n \to  + \infty }\frac{n\log c_2+\sum \limits_{i = 1}^{n} e^i }{\log c_1 +e^n} \nonumber \\
&=\frac{e}{e-1}\mathop {\liminf }\limits_{n \to  + \infty }\frac{e^n}{\log c_1+e^n}=\frac{e}{e-1}<+\infty. \nonumber 
\end{align}
By Theorem \ref{th2} we deduce that ${\rm h (C)}<+\infty$.	
\end{proof}

\begin{bibdiv}
\begin{biblist}
	
\bib{Ba}{article}{
	title={Univalent functions, Hardy spaces and spaces of Dirichlet type},
	author={A. Baernstein and D. Girela and J. \'{A}. Pel\'{a}ez,},
	journal={Illinois J. Math.},
	volume={48},
	date={2004},
	pages={837--859}
}
	
\bib{Bea}{article}{
	title={The hyperbolic metric and geometric function theory},
	author={A.F. Beardon and D. Minda,},
	journal={Quasiconformal mappings and their applications},
	date={2007},
	pages={9--56}
}

\bib{Bet}{article}{
	title={Harmonic measure on simply connected domains of fixed inradius},
	author={D. Betsakos},
	journal={Ark. Mat.},
	volume={36},
	date={1998},
	pages={275--306}
}
\bib{Betsa}{article}{
	title={On the asymptotic behavior of the trajectories of semigroups of holomorphic functions},
	author={D. Betsakos},
	journal={J. Geometric Analysis},
	volume={26},
	date={2016},
	pages={557--569}
}
\bib{Betsakos}{article}{
	title={On the probability of fast exits and long stays of a planar Brownian motion in simply connected domains},
	author={D. Betsakos and M. Boudabra and G. Markwosky},
	journal={J. Math. Anal. Appl.},
	volume={493},
	date={2021},
	pages={10 pp}
}
\bib{Bou}{article}{
	title={On the finiteness of moments of the exit time of planar Brownian motion from comb domains},
	author={M. Boudabra and G. Markowsky},
	journal={Ann. Fenn. Math.},
	volume={46},
	date={2021},
	pages={527--536}
}
\bib{Bur}{article}{
	title={Exit times of Brownian motion, harmonic majorization, and Hardy spaces},
	author={D. L. Burkholder},
	journal={Advances in Mathematics},
	volume={26},
	date={1977},
	pages={182--205}
}
\bib{Dur}{book}{
	title={Theory of $H^p$ Spaces},
	author={P.L. Duren},
	date={1970},
	publisher={Academic Press},
	address={New York-London}
}

\bib{Ess}{article}{
	title={On analytic functions which are in $H^p$ for some positive $p$},
	author={M. Ess{\' e}n},
	journal={Ark. Mat.},
	volume={19},
	date={1981},
	pages={43--51}
}

\bib{Fur}{book}{
	title={Limits, Series, and Fractional Part Integrals},
	author={O. Furdui},
	series={Problems in mathematical analysis, Problem Books in Mathematics},
	date={2013},
	publisher={Springer},
	address={New York}
}
\bib{Han1}{article}{
	title={Hardy classes and ranges of functions},
	author={L.J. Hansen},
	journal={Michigan Math. J.},
	volume={17},
	date={1970},
	pages={235--248}
}
\bib{Han2}{article}{
	title={The Hardy class of a spiral-like function},
	author={L.J. Hansen},
	journal={Michigan Math. J.},
	volume={18},
	date={1971},
	pages={279--282}
}

\bib{Jen}{article}{
	title={On comb domains},
	author={J.A. Jenkins},
	journal={Proc. Amer. Math. Soc.},
	volume={124},
	date={1996},
	pages={187--191}
}
\bib{Kar1}{article}{
	title={Hyperbolic distance and membership of conformal maps in the Hardy space},
	author={C. Karafyllia},
	journal={Proc. Amer. Math. Soc.}
	volume={147},
	date={2019},
	pages={3855--3858}
}

\bib{Kar}{article}{
	title={On the Hardy number of a domain in terms of harmonic measure and hyperbolic distance},
	author={C. Karafyllia},
	journal={Ark. Mat.},
	volume={58},
	date={2020},
	pages={307--331}

}
\bib{Karam}{article}{
	title={On the Angular Derivative of Comb Domains},
	author={N. Karamanlis},
	journal={Comput. Methods Funct. Theory},
	volume={19},
	date={2019},
	pages={613-–623}
}

\bib{Kim}{article}{
	title={Hardy spaces and unbounded quasidisks},
	author={Y.C. Kim and T. Sugawa},
	journal={Ann. Acad. Sci. Fenn. Math.},
	volume={36},
	date={2011},
	pages={291--300}
}
\bib{Na}{article}{
	title={The Stolz-Cesaro theorem},
	author={G. Nagy},
	journal={Manuscript available electronically at https://www.math.ksu.edu/~nagy/snippets/stolz-cesaro.pdf},
	volume={},
	date={Accessed on 01/21/2021},
	pages={1--4}
}
\bib{Ra}{article}{
	title={Univalent functions in Hardy, Bergman, Bloch and related spaces},
	author={F. P\'{e}rez-Gonz\'{a}lez and J. R\"{a}tty\"{a}},
	journal={J. d' Anal. Math.},
	volume={105},
	date={2008},
	pages={125--148}
}
\bib{Pog}{article}{
	title={Hardy spaces and twisted sectors for geometric models},
	author={P. Poggi-Corradini},
	journal={Trans. Amer. Math. Soc.},
	volume={348},
	date={1996},
	pages={2503--2518}
}

\bib{Co2}{article}{
	title={The Hardy class of K{\oe}nigs maps},
	author={P. Poggi-Corradini},
	journal={Michigan Math. J.},
	volume={44},
	date={1997},
	pages={495--507}
}

\bib{St}{article}{
	title={A geometric characterization of analytic functions with bounded mean oscillation},
	author={D. Stegenga and K. Stephenson},
	journal={J. London Math. Soc (2)},
	volume={24},
	date={1981},
	pages={243--254}
}
\end{biblist}
\end{bibdiv}

\end{document}